\documentclass[11pt]{article}
\usepackage{amssymb}
\usepackage{amsmath}
\usepackage{amsmath}
\usepackage{amsthm}
\usepackage{amsfonts}
\usepackage{graphicx}
\usepackage{enumerate}
\usepackage{comment}
\usepackage{bm}
\usepackage{booktabs}
\usepackage{bbm} 
\usepackage{dsfont}
\usepackage{natbib}
\usepackage{har2nat}
\usepackage{caption}
\usepackage{float}
\usepackage{xcolor,colortbl}
\usepackage{multirow}
\setcounter{MaxMatrixCols}{10}
\usepackage{multicol}
\usepackage{tikz}
\numberwithin{equation}{section}
\newtheorem{theorem}{Theorem}[section]

\newtheorem{proposition}{Proposition}[section]

\newtheorem{lemma}{Lemma}[section]
\theoremstyle{definition}
\newtheorem{assumption}{Assumption}[section]

\setlength{\textwidth}{15cm} \setlength{\oddsidemargin}{.5cm}

\DeclareMathOperator{\var}{Var}

\setlength{\textwidth}{17cm} \setlength{\oddsidemargin}{-.5cm}

\newcommand{\norm}[1]{\left \|#1\right \|}
\newcommand{\abs}[1]{\left\vert #1 \right\vert}

\renewcommand{\hat}{\widehat}
\renewcommand{\tilde}{\widetilde}

\newcommand{\Var}[0]{\mathrm{Var}}


\newcommand{\argmin}{\text argmin}

\renewcommand{\argmin}{\rm argmin}

\renewcommand{\hat}{\widehat}
\renewcommand{\bar}{\overline}
\allowdisplaybreaks

\begin{document}
\title{Extremal Quantiles under Two-Way Clustering\thanks{We are grateful to Antonio F. Galvao, Bryan S. Graham, Michael Jansson, and Yassine Sbai Sassi for their invaluable feedback and suggestions. We also thank the participants of the SEA 2024 conference and NTU CRETA seminar for their comments and discussions. Sasaki thanks Brian and Charlotte Grove Chair for research support. All the remaining errors are ours.\bigskip}}
\author{
	Harold D. Chiang\thanks{Harold D. Chiang: hdchiang@wisc.edu. Department of Economics, University of Wisconsin-Madison, William H. Sewell Social Science Building, 1180 Observatory Drive,	Madison, WI 53706-1393, USA\medskip} 
	\qquad 
	Ryutah Kato\thanks{Ryutah Kato: kato-ryuta-wh@ynu.ac.jp. Graduate School of International Social Sciences, Yokohama National University\medskip} 
	\qquad 
	Yuya Sasaki\thanks{Yuya Sasaki: yuya.sasaki@vanderbilt.edu. Department of Economics, Vanderbilt University, VU Station B \#351819, 2301 Vanderbilt Place, Nashville, TN 37235-1819, USA\medskip}
}
\date{}
\maketitle

\begin{abstract}
This paper studies extremal quantiles under two-way clustered dependence. 
We show that the limiting distribution of unconditional intermediate-order tail quantiles is Gaussian. 
This result is notable because two-way clustering typically leads to non-Gaussian limiting behavior. 
Remarkably, extremal quantiles remain asymptotically Gaussian even in degenerate cases. 
Building on this insight, we extend our analysis to extremal quantile regression at intermediate orders. 
Simulation results corroborate our theoretical findings. 
Finally, we provide an empirical application to growth-at-risk, showing that earlier empirical conclusions remain robust even after accounting for two-way clustered dependence in panel data and the focus on extreme quantiles.
\medskip\\
{\bf Keywords:} asymptotic Gaussianity, extremal quantile, two-way clustering
\end{abstract}

\newpage

\section{Introduction}
Quantile regression has become increasingly important in economics and related fields for analyzing the effects of covariates across different parts of the conditional distribution. Many key applications focus on the study of rare events, or quantiles in tails. Notable examples include value-at-risk analysis in financial economics \citep{Chernozhukov2001, Engle2004}, the analysis of highest bids in auctions \citep{chernozhukov2004likelihood, Hirano2003}, the investigation of birth weights among live infants \citep*{Chernozhukov2011, Zhang2018, Sasaki2022, Sasaki2023}, and the growth-at-risk (GaR) and related analyses \citep[e.g.,][]{adrian2019vulnerable, adrian2022term, kiley2022unemployment, kiley2024growth, lopez2024inflation}, among others.

The conventional large-sample theory for quantile regression encounters difficulties in the extreme tails of the distribution due to data sparsity. As a result, traditional inference tools, such as the analytical variance estimator (\citealt{koenker2005quantile}, Section 3.4) and the pivotal bootstrap \citep*{chernozhukov2009finite}, are not directly applicable in these regions. However, advances in modern extreme value theory have helped overcome this limitation for unconditional quantiles \citep*{Leadbetter1983, Resnick1987, Embrechts1997}. \citet{Chernozhukov2005} extends this theoretical framework to estimation and inference for extremal conditional quantiles. Further developments include asymptotic theory and inference methods for quantile treatment effects in the tails, as explored by \citet*{d2018extremal}, \citet{Zhang2018}, \citet*{Deuber2021}, and \citet*{Sasaki2023}.

At the same time, applied economic research increasingly involves data exhibiting complex clustering structures. In particular, two-way clustering is frequently encountered in empirical work. Common examples include panel data, matched employer–employee data, matched student–teacher data, scanner data indexed by stores and products, market share data indexed by markets and products, and growth or development data indexed by ethnicity and geographic units. These examples underscore the prevalence of multi-way clustering in real-world datasets. It is essential to account for the complications that arise under two-way clustering in order to conduct reliable and valid econometric analysis.

The seminal work by \citet*{Cameron2011} introduces methods for two-way cluster-robust inference, which has had a substantial influence on the field. \citet{Menzel2021} provides a formal analysis of the asymptotic properties and bootstrap validity in the presence of two-way clustering. \citet*{Davezies2021} advance the development of empirical process theory for such settings, and \citet*{MacKinnon2021} propose wild bootstrap procedures that explicitly account for clustering effects. In a related direction, \citet{Graham2020} studies sparse network asymptotics, with a focus on logistic regression models.

Our work builds on these foundational contributions and focuses on the estimation of rare events in data sampled under two-way clustering. This focus is not only novel and empirically relevant, but also theoretically crucial, as it addresses the distinctive challenges and provides new insights. Two-way clustering introduces dependence structures that complicate statistical analysis, particularly in distributional tails. By targeting tail estimation, we deepen understanding of the behavior of extreme observations in such settings.
Notably, while \citet{Menzel2021} shows that the asymptotic distribution of sample means and related statistics may deviate from Gaussianity under two-way clustering, we find that extremal quantiles exhibit robustness to such deviations, even in degenerate cases.

The techniques underlying our asymptotic theory are closely related to prior research developed under alternative asymptotic frameworks. Our approach draws parallels with the analysis of network subgraph moments by \citet*{Bickel2011}, dyadic density estimation by \citet*{Graham2022, chiang2023empirical, cattaneo2023uniform}, and the semiparametric asymptotic regimes studied by \citet*{cattaneo2014small, cattaneo2018alternative}, in which both linear and quadratic components play essential roles. These studies provide valuable insights and foundational frameworks upon which our work builds.
In addition, this paper contributes to the growing literature on inference for quantile regression with cluster-sampled data, including \citet{hagemann2017cluster, hagemann2024inference}, and connects with the asymptotic theory for various regression models in dyadic data contexts, such as \citet{graham2020dyadic} and \citet{sassi2023ordinary}.

The remainder of the paper is organized as follows. Section~2 analyzes unconditional quantiles under two-way clustering, presenting both the theoretical framework and supporting simulation results. Section~3 extends these results to quantile regression. Section~4 provides an empirical application to growth-at-risk (GaR). Section~5 concludes. All mathematical proofs are collected in the appendices.

\section{Extremal Quantiles}\label{sec:unconditional}
\subsection{The Setup and Estimation}\label{sec:setup}
We begin our analysis by examining the limiting distribution of unconditional quantiles in the tails. In the subsequent section, we extend this baseline framework to extremal quantile regression. The core ideas and methodological approach remain consistent across both settings. 
We first focus on unconditional quantiles because this setting allows us to more transparently illustrate how Gaussianity is achieved.

Consider a random variable $Y$ taking values in $\mathbb{R}$, and let its distribution function be denoted by $F_Y$. 
Our focus is on the extremal quantile
$$
F_Y^{-1}(\tau) = \inf \left\{ y : F_Y(y) > \tau \right\}
$$
for small values of $\tau$ (close to $0$). A symmetric argument applies to the case when $\tau$ is close to $1$.

Suppose we observe data $\{Y_{it} : 1 \le i \le N,\ 1 \le t \le T\}$, where $Y_{it} \overset{d}{=} Y$ is generated according to
$$
Y_{it} = f(\alpha_i, \gamma_t, \varepsilon_{it})
$$
for some Borel measurable function $f$, where $\alpha_1, \ldots, \alpha_N$, $\gamma_1, \ldots, \gamma_T$, and $\varepsilon_{11}, \ldots, \varepsilon_{NT}$ are mutually independent random vectors. 
Each of the latent components, $\alpha_i$, $\gamma_t$, and $\varepsilon_{it}$, is allowed to be vector-valued with possibly countably infinite dimensions. This setup accommodates dependence within each row $\left(Y_{i1}, \ldots, Y_{iT}\right)$ and within each column $\left(Y_{1t}, \ldots, Y_{Nt}\right)$ via the shared latent factors.

The unconditional $\tau$-th quantile $\beta(\tau)$ of $Y$ is estimated by the sample quantile
\begin{equation}\label{eq:def_beta_hat}
\hat{\beta}(\tau) \in \arg \min_{\beta \in \mathbb{R}} \sum_{i=1}^N \sum_{t=1}^T \rho_\tau\left(Y_{it} - \beta\right),
\end{equation}
where $\rho_\tau$ defined by $\rho_\tau(u) = u(\tau - \mathbbm{1}\{u < 0\})$ denotes the check function.

\subsection{Asymptotic Theory for the Extremal Quantiles}\label{sec:asymptotic_theory}
To conduct inference for $\beta(\tau)$ based on $\hat{\beta}(\tau)$, we study the limiting distribution of the statistic
$$
\widehat{Z}_{NT} = r_{NT} \left(\hat{\beta}(\tau) - \beta(\tau)\right),
$$
where $\{r_{NT}\}$ is a sequence that governs the rate of convergence. Our focus is on the asymptotic behavior of $\widehat{Z}_{NT}$, and consequently of $\hat{\beta}(\tau)$, in the left tail of the distribution of $Y$, as $\tau \to 0$.

For ease of writing, define
\begin{equation}\label{eq:w}
W_{NT}(\tau) \equiv \frac{-1}{\sqrt{\tau NT}} \sum_{i=1}^N \sum_{t=1}^T \left( \tau - \mathbbm{1}\left\{ Y_{it} < \beta(\tau) \right\} \right),
\end{equation}
and denote its variance by
$$
\sigma^2_{NT} = \Var \left( W_{NT}(\tau) \right).
$$
Let $m > 1$ be fixed. With these notations, write the statistic as
$$
\widehat{Z}_{NT} \equiv \frac{a_{NT}}{\sigma_{NT}} \left( \hat{\beta}(\tau) - \beta(\tau) \right), \quad \text{where} \quad a_{NT} \equiv \frac{\sqrt{\tau NT}}{\beta(m \tau) - \beta(\tau)}.
$$

By equation~\eqref{eq:def_beta_hat}, the transformation $\widehat{Z}_{NT}$ of $\hat{\beta}(\tau)$ minimizes the objective $Q_{NT}(\cdot,\tau)$, where
\begin{equation}\label{eq:q_original}
Q_{NT}(z, \tau) \equiv \frac{a_{NT}}{\sigma^2_{NT} \sqrt{\tau NT}} \sum_{i=1}^N \sum_{t=1}^T \left( \rho_\tau\left(Y_{it} - \beta(\tau) - \frac{\sigma_{NT} \cdot z}{a_{NT}}\right) - \rho_\tau\left(Y_{it} - \beta(\tau)\right) \right).
\end{equation}
To rewrite \eqref{eq:q_original} in a form more convenient for asymptotic analysis, we use Knight's identity:
$$
\rho_\tau(u - v) - \rho_\tau(u) = -v(\tau - \mathbbm{1}\{u < 0\}) + \int_0^v \left( \mathbbm{1}\{u \le s\} - \mathbbm{1}\{u \le 0\} \right) ds.
$$
Applying this identity, we can express the objective \eqref{eq:q_original} as
\begin{equation}\label{eq:q_decomposition}
Q_{NT}(z, \tau) = \sigma_{NT}^{-1} W_{NT}(\tau) \cdot z + G_{NT}(z, \tau),
\end{equation}
where $W_{NT}(\tau)$ is defined in \eqref{eq:w} and $G_{NT}(z, \tau)$ is given by
$$
G_{NT}(z, \tau) \equiv \frac{a_{NT}}{\sigma^2_{NT} \sqrt{\tau NT}} \sum_{i=1}^N \sum_{t=1}^T \int_0^{\sigma_{NT} \cdot z / a_{NT}} \left[ \mathbbm{1}\left(Y_{it} - \beta(\tau) \le s\right) - \mathbbm{1}\left(Y_{it} - \beta(\tau) \le 0\right) \right] ds.
$$

To analyze the asymptotic behavior of $\widehat{Z}_{NT}$, therefore, we first study the asymptotic properties of the components, $W_{NT}(\tau)$ and $G_{NT}(z, \tau)$, in the decomposition of the objective function $Q_{NT}(z, \tau)$ in \eqref{eq:q_decomposition}. To this end, we introduce a set of assumptions.

Recall that $F_Y$ denotes the distribution function of $Y$.
Assume, without loss of generality, by location normalization that the lower endpoint of its support is $s_Y = 0$ or $s_Y = -\infty$ for notational convenience. Suppose that $F_Y$ satisfies a domain-of-attraction condition of Type 1, 2, or 3, defined as follows.
 
\begin{enumerate}[${}$\quad Type 1: \ ]
\item
as $y \searrow s_Y=0$ or $-\infty$, 
$
F_Y(y+v a(y)) \sim F_Y(y) e^v \ \text{ for all } v \in \mathbb{R} \ (\xi \equiv 0);
$
\item
as $y \searrow s_Y=-\infty$, \hspace{7mm}
$
F_Y(v y) \sim v^{-1 / \xi} F_Y(y)  \ \text{ for all } v>0 \text{ for some } \xi>0; \qquad\text{and}
$
\item
as $y \searrow s_Y=0$, \hspace{12mm}
$
F_Y(v y) \sim v^{-1 / \xi} F_Y(y) \ \text{ for all } v>0 \text{ for some } \xi<0;
$
\end{enumerate}
where 
$a(y) \equiv \int_{s_Y}^y F_Y(v) d v / F_Y(y) \text { for } y>s_Y$.
This requirement is mild, as most common distribution families fall into one of the three types: Type 1 (e.g., normal), Type 2 (e.g., Student's $t$), or Type 3 (e.g., uniform). The parameter $\xi$ is referred to as the \emph{tail index}, which characterizes the heaviness of the left tail. Specifically, the tail is heavy if $\xi > 0$, light (or thin) if $\xi = 0$, and bounded if $\xi < 0$.

In addition, we make the following assumptions.

\begin{assumption}\label{a:regular}
 (i) $\tau \mapsto \displaystyle\frac{\partial F_Y^{-1}(\tau)}{\partial \tau}$
is regularly varying at 0 with exponent $-\xi-1$; and \\
(ii) Each of $\tau \mapsto \displaystyle f_Y( F_Y^{-1}(\tau) | \alpha_i)$, $\displaystyle f_{Y}( F_Y^{-1}(\tau) |\gamma_t )$, $\displaystyle f_{Y}( F_Y^{-1}(\tau)|\alpha_i,\gamma_t)$
is regularly varying at 0.
\end{assumption}

\begin{assumption}\label{a:b}
As $\tau\to 0$, $N, T \to \infty$ and $NT\tau\to \infty$, the following conditions hold:\\
(i) $E [P^2\left(Y_{it}<\beta(\tau)|\alpha_i\right)]=o(\tau\land\tau^2 N)$;
(ii) $E [P^2\left(Y_{it}<\beta(\tau)|\gamma_t\right)]=o(\tau\land\tau^2 T)$; and \\
(iii) $E [P^2\left(Y_{it}<\beta(\tau)|\alpha_i, \gamma_t\right)]=o(\tau)$.
\end{assumption}

\begin{assumption}\label{a:density}
As $y\to s_Y$, the following conditions hold: \\
(i) $f_Y(y|\alpha_i)=O(f_Y(y))$;
(ii) $f_Y(y|\gamma_t)=O(f_Y(y))$; and
(iii) $f_Y(y|\alpha_i,\gamma_t)=O(f_Y(y))$.
\end{assumption}

\begin{assumption}\label{a:lyapunov}
As $\tau\to 0$ and $N, T \to \infty$, the following conditions hold:\\
(i) $\displaystyle\frac{E[P\left(Y_{it}< \beta(\tau)\right) |\alpha_i)^3]}{E[P\left(Y_{it}< \beta(\tau)\right) |\alpha_i)^2]^{\frac{3}{2}}}=o(\sqrt{N})$; 
(ii) $\displaystyle\frac{E[P\left(Y_{it}< \beta(\tau)\right) |\gamma_t)^3]}{E[P\left(Y_{it}< \beta(\tau)\right) |\gamma_t)^2]^{\frac{3}{2}}}=o(\sqrt{T})$; and
 \\
(iii) $P(Y_{it}< \beta(\tau)|\alpha_i,\gamma_t)\cdot NT \to \infty$.
\end{assumption}

Assumption~\ref{a:regular} requires the existence and regular variation of the unconditional and conditional quantile density functions. 
The exponents may differ across 
$f_Y\left(F_Y^{-1}(\tau) \mid \alpha_i\right)$, 
$f_Y\left(F_Y^{-1}(\tau) \mid \gamma_t\right)$, and 
$f_Y\left(F_Y^{-1}(\tau) \mid \alpha_i, \gamma_t\right)$.
Assumption~\ref{a:b} places constraints on the conditional cumulative distribution function of $Y$. For instance, part~(i) requires that, as $\tau \to 0$, the second moment of the conditional probability 
$P\left(Y_{it} < \beta(\tau) \mid \alpha_i\right)$ 
diminishes at a rate faster than or comparable to $\tau$ (or $\tau^2 N$ if $N$ is small). In other words, the conditional probability must decay sufficiently fast as $\tau$ becomes small. 
The second moment of the unconditional probability 
$P(Y < \beta(\tau))$ 
scales with $\tau^2$, so the condition imposed by Assumption~\ref{a:b} is relatively mild and, in particular, weaker than requiring the conditional probability to decay at the same rate as the unconditional one.
Assumption~\ref{a:density} imposes regularity conditions on the relationship between the conditional and unconditional density functions of $Y$. For example, part~(i) asserts that the conditional density 
$f_Y(y \mid \alpha_i)$ 
should not vanish at a rate faster than the unconditional density $f_Y(y)$ as $y \to s_Y$. In essence, the decline of the conditional density in the lower tail should be comparable to that of the unconditional density.
Assumption~\ref{a:lyapunov} imposes moment conditions necessary to satisfy the Lyapunov condition, ensuring asymptotic Gaussianity of the normalized sample quantile.

Under these four assumptions, we can establish
\begin{align*}
\sigma_{NT}^{-1} W_{NT}(\tau) &\xrightarrow{d} \mathcal{N}(0, 1)
\qquad \text{and} \\
G_{NT}(z, \tau) &\xrightarrow{p} \frac{1}{2} \cdot \frac{m^{-\xi} - 1}{-\xi} \cdot z^2.
\end{align*}
Formal derivations are provided in Lemmas~\ref{lemma:w} and~\ref{lemma:g} in Appendices~\ref{sec:lemma:w} and~\ref{sec:lemma:g}, respectively.
As a consequence, we obtain the following limiting distribution for the solution $\widehat{Z}_{NT}$ to $\min_{z \in \mathbb{R}} Q_{NT}(z,\tau)$, where $Q_{NT}(z, \tau) = \sigma_{NT}^{-1} W_{NT}(\tau) \cdot z + G_{NT}(z, \tau)$ as defined in equation~\eqref{eq:q_decomposition}.

\begin{theorem}\label{theorem:main}
If Assumptions \ref{a:regular}, \ref{a:b}, \ref{a:density}, and \ref{a:lyapunov} are satisfied, then 
\begin{equation}\label{eq:asymptotic_normal}
\widehat{Z}_{NT} \stackrel{d}{\rightarrow} N\left(0,  \frac{\xi^2}{\left(m^{-\xi}-1\right)^2}\right).
\end{equation}
\end{theorem}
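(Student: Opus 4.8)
The plan is to deduce Theorem~\ref{theorem:main} from the two convergence statements for $\sigma_{NT}^{-1}W_{NT}(\tau)$ and $G_{NT}(z,\tau)$ (Lemmas~\ref{lemma:w} and~\ref{lemma:g}) via a standard convexity/argmin argument. The key point is that by the decomposition \eqref{eq:q_decomposition}, the map $z \mapsto Q_{NT}(z,\tau)$ is convex (as a sum of convex check-function increments), and $\widehat{Z}_{NT}$ is its minimizer. So it suffices to identify the limit of the argmin of $Q_{NT}(\cdot,\tau)$.

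First I would note that $\sigma_{NT}^{-1}W_{NT}(\tau) \xrightarrow{d} N(0,1) =: \mathbb{W}$ and $G_{NT}(z,\tau) \xrightarrow{p} g(z) := \tfrac12 \cdot \tfrac{m^{-\xi}-1}{-\xi} \cdot z^2$ pointwise in $z$. By Slutsky applied jointly (the limit of $G_{NT}$ is deterministic, so joint convergence of the pair follows from the marginals), for each fixed $z$,
\begin{equation*}
Q_{NT}(z,\tau) \xrightarrow{d} Q_\infty(z) := \mathbb{W}\, z + g(z).
\end{equation*}
Since each $Q_{NT}(\cdot,\tau)$ is convex and the limit process $Q_\infty$ is convex with a unique minimizer (because $g$ is strictly convex, using $\tfrac{m^{-\xi}-1}{-\xi} > 0$ for all $\xi$ and $m>1$), the convexity lemma for stochastic processes (e.g., the argmax/convexity results of \citet{Davezies2021}-type arguments, or Knight's convexity lemma, or Kato's result) gives that the minimizers converge in distribution:
\begin{equation*}
\widehat{Z}_{NT} = \argmin_z Q_{NT}(z,\tau) \xrightarrow{d} \argmin_z Q_\infty(z).
\end{equation*}
Then I would compute $\argmin_z \big(\mathbb{W}\,z + g(z)\big)$ explicitly: setting the derivative $\mathbb{W} + \tfrac{m^{-\xi}-1}{-\xi}\, z = 0$ yields $\argmin = \tfrac{\xi}{m^{-\xi}-1}\,\mathbb{W}$, which is $N\!\big(0, \tfrac{\xi^2}{(m^{-\xi}-1)^2}\big)$ since $\mathbb{W}\sim N(0,1)$. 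This delivers \eqref{eq:asymptotic_normal}.

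The step requiring the most care is the passage from pointwise (finite-dimensional) convergence of the convex objectives $Q_{NT}(\cdot,\tau)$ to convergence of their argmins. The standard tool is the result that, for convex random functions converging pointwise (in distribution) to a limit with a unique minimizer, the argmins converge in distribution; one must verify the hypotheses, namely pointwise stochastic convergence on a dense set, convexity of each $Q_{NT}$, and almost-sure uniqueness and finiteness of $\argmin Q_\infty$. Uniqueness and finiteness are immediate from strict convexity and coercivity of $g$ (the coefficient $\tfrac{m^{-\xi}-1}{-\xi}$ is strictly positive and $\mathbb{W}$ is a.s. finite, so $Q_\infty(z)\to+\infty$ as $|z|\to\infty$), and the pointwise-in-distribution convergence of $Q_{NT}(z,\tau)$ for each fixed $z$ follows by combining Lemmas~\ref{lemma:w} and~\ref{lemma:g} with Slutsky's theorem. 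A minor additional point is that $G_{NT}(z,\tau) \xrightarrow{p} g(z)$ for each fixed $z$ can be upgraded to joint convergence over any finite collection of $z$-values, which is all that is needed to invoke the convexity argument; this is immediate since the limit is non-random.
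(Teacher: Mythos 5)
Your proposal is correct and follows essentially the same route as the paper: combine Lemmas~\ref{lemma:w} and~\ref{lemma:g} to obtain the limit $\tilde W z+\tfrac12\cdot\tfrac{m^{-\xi}-1}{-\xi}\cdot z^2$ of the convex objective, then apply the convexity lemma of \citet{geyer1996asymptotics} and \citet{knight1999epi} to pass to the argmin, whose distribution is $N\bigl(0,\xi^2/(m^{-\xi}-1)^2\bigr)$. Your additional verification of strict convexity, coercivity, and uniqueness of the limiting minimizer is a welcome elaboration of hypotheses the paper leaves implicit.
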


While the formal proof is deferred to Appendix~\ref{sec:theorem:main}, we briefly highlight its key components here because it is of interest on its own. The statistic $W_{NT}(\tau)$ is decomposed as
$$
W_{NT}(\tau) = A_{1,\tau} + A_{2,\tau} + A_{3,\tau} + A_{4,\tau},
$$
where $A_{1,\tau}$, $A_{2,\tau}$, and $A_{4,\tau}$ are the projections onto $\alpha_i$, $\gamma_t$, and $(\alpha_i, \gamma_t)$, respectively, and $A_{3,\tau}$ denotes the remainder term.
Under two-way clustered sampling, the remainder term $A_{3,\tau}$ is, in general, asymptotically non-Gaussian. However, by considering an appropriate sequence of quantiles with $\tau \to 0$, we can ensure that $A_{3,\tau}$ is asymptotically negligible relative to the leading terms, which are Gaussian.\footnote{Assumption~\ref{a:b} ensures that $\Var(A_{3,\tau}) = o(\tau)$ and $\Var(A_{4,\tau}) = 1$, which together imply that the term $A_{3,\tau}$ is asymptotically negligible relative to the Gaussian component $A_{4,\tau}$.} 
(See the proof of Lemma~\ref{lemma:w} in Appendix~\ref{sec:lemma:w} for further details.)
As a result, we obtain the asymptotic Gaussianity stated in equation~\eqref{eq:asymptotic_normal}, despite the potential complications introduced by two-way clustering.
This robust Gaussianity is a special feature of extreme quantiles, not shared by intermediate quantiles.


The corresponding convergence rate in cross-sectional settings is $\sqrt{\tau N}$ \citep[cf.][]{Chernozhukov2005}. 
By analogy, one might expect a convergence rate of $\sqrt{\min\{\tau N, \tau T\}}$ under two-way clustered sampling. 
However, the actual convergence rate turns out to be faster than this benchmark.
The intuition behind this apparently counterintuitive result is as follows. 
The convergence rate of the estimator $\widehat{Z}_{NT}$ is given by $\displaystyle\frac{a_{NT}}{\sigma_{NT}}$. 
In the non-degenerate case (i.e., when $\Var(W_{NT} \mid \alpha_i) > 0$ or $\Var(W_{NT} \mid \gamma_t) > 0$), we have
$
\sigma_{NT} = O\left( \sqrt{\Var(A_{1,\tau})} \vee \sqrt{\Var(A_{2,\tau})} \right).
$
By Assumption~\ref{a:b}, we obtain $\sqrt{\Var(A_{1,\tau})} = o(T)$ and $\sqrt{\Var(A_{2,\tau})} = o(N)$, which together imply
$
\sigma_{NT} = o\left( \sqrt{N} \vee \sqrt{T} \right).
$
As a result, we obtain
$
\frac{a_{NT}}{\sigma_{NT}} = o\left( \sqrt{\tau N} \vee \sqrt{\tau T} \right),
$
indicating that the rate of $\widehat{Z}_{NT}$ is faster than $\sqrt{\min\{\tau N, \tau T\}}$.
In the degenerate case (i.e., $\Var(W_{NT} \mid \alpha_i) = 0$ and $\Var(W_{NT} \mid \gamma_t) = 0$), we have $\sigma_{NT} = O\left( \sqrt{\Var(A_{4,\tau})} \right) = O(1)$, and hence the convergence rate simplifies to $a_{NT} = \sqrt{\tau NT}$. 
This corresponds to the rate obtained in \citet{Chernozhukov2005} under $NT$ i.i.d.\ observations.

\subsection{Variance Estimation for the Extremal Quantiles}\label{sec:variance_estimation}
As implied by the discussion after Theorem \ref{theorem:main} from the previous subsection, we have the decomposition
\[
\sigma^2_{NT} = \Var\left(W_{NT}(\tau)\right) = \Var(A_{1,\tau} + A_{2,\tau} + A_{3,\tau} + A_{4,\tau}),
\]
where the components are
\begin{align*}
A_{1,\tau} &= \frac{-\sqrt{T}}{\sqrt{\tau N}} \sum_{i=1}^N \mathbb{E}\left[ \tau - \mathbbm{1}\left\{ Y_{it} < \beta(\tau) \right\} \mid \alpha_i \right], \\
A_{2,\tau} &= \frac{-\sqrt{N}}{\sqrt{\tau T}} \sum_{t=1}^T \mathbb{E}\left[ \tau - \mathbbm{1}\left\{ Y_{it} < \beta(\tau) \right\} \mid \gamma_t \right], \\
A_{3,\tau} &= \frac{-1}{\sqrt{\tau NT}} \sum_{i=1}^N \sum_{t=1}^T \Big( \mathbb{E}\left[ \tau - \mathbbm{1}\left\{ Y_{it} < \beta(\tau) \right\} \mid \alpha_i, \gamma_t \right] \\
&\qquad - \mathbb{E}\left[ \tau - \mathbbm{1}\left\{ Y_{it} < \beta(\tau) \right\} \mid \alpha_i \right] 
- \mathbb{E}\left[ \tau - \mathbbm{1}\left\{ Y_{it} < \beta(\tau) \right\} \mid \gamma_t \right] \Big), \qquad\text{and}\\
A_{4,\tau} &= \frac{-1}{\sqrt{\tau NT}} \sum_{i=1}^N \sum_{t=1}^T \left( \left( \tau - \mathbbm{1}\left\{ Y_{it} < \beta(\tau) \right\} \right) - \mathbb{E}\left[ \tau - \mathbbm{1}\left\{ Y_{it} < \beta(\tau) \right\} \mid \alpha_i, \gamma_t \right] \right).
\end{align*}
By construction, the four components are orthogonal, and we have
$$
\Var(A_{1,\tau} + A_{2,\tau} + A_{3,\tau} + A_{4,\tau}) = \Var(A_{1,\tau}) + \Var(A_{2,\tau}) + \Var(A_{3,\tau}) + \Var(A_{4,\tau}).
$$

From the previous subsection, we know that the asymptotic variance of $A_{3,\tau} + A_{4,\tau}$ converges to 1.
On the other hand, the variances of $A_{1,\tau}$ and $A_{2,\tau}$ are given by
\begin{align*}
\Var(A_{1,\tau}) &= \frac{T}{\tau N} \sum_{i=1}^N \mathbb{E} \left[ \left( \mathbb{E}[ D_{it} \mid \alpha_i ] \right)^2 \right] 
= \frac{T}{\tau} \left( -\tau^2 + \mathbb{E} \left[ P\left( Y_{it} < \beta(\tau) \mid \alpha_i \right)^2 \right] \right)
\qquad\text{and}\\
\Var(A_{2,\tau}) &= \frac{N}{\tau T} \sum_{t=1}^T \mathbb{E} \left[ \left( \mathbb{E}[ D_{it} \mid \gamma_t ] \right)^2 \right] 
= \frac{N}{\tau} \left( -\tau^2 + \mathbb{E} \left[ P\left( Y_{it} < \beta(\tau) \mid \gamma_t \right)^2 \right] \right),
\end{align*}
where $D_{it} = \tau - \mathbbm{1}\left\{ Y_{it} < \beta(\tau) \right\}$.
Thus, the asymptotic variance of $W_{NT}(\tau)$ depends on the behavior of $\tau$, $N$, $T$, and the second moments of the conditional probabilities.

For example, if $\tau T = o(1)$ and $\mathbb{E} \left[ P\left(Y_{it} < \beta(\tau) \mid \alpha_i \right)^2 \right] = o(\tau^2)$, then $\Var(A_{1,\tau}) \to 0$. 
Similarly, if $\tau N = o(1)$ and $\mathbb{E} \left[ P\left(Y_{it} < \beta(\tau) \mid \gamma_t \right)^2 \right] = o(\tau^2)$, then $\Var(A_{2,\tau}) \to 0$.
In this case, the entire asymptotic variance of $W_{NT}(\tau)$ is driven by $A_{3,\tau} + A_{4,\tau}$ and converges to 1.
Since the rates of these components are unknown in practice, we propose using the sample counterpart of $\sigma_{NT}$ to construct confidence intervals, rather than relying on its asymptotic approximation.

Note that
$
\left| \mathbb{E}\left[ \tau - \mathbbm{1}\left\{ Y_{it} < \beta(\tau) \right\} \mid \alpha_i \right]
- \mathbb{E}\left[ \tau - \mathbbm{1}\left\{ Y_{it} < \hat{\beta}(\tau) \right\} \mid \alpha_i \right] \right| = o_p(1)
$
uniformly over $i$, due to the smoothness of the conditional CDF and the consistency of $\hat{\beta}(\tau)$.
We are going to claim that 
$
\frac{1}{T} \sum_{t=1}^T \left( \tau - \mathbbm{1}\left\{ Y_{it} < \hat{\beta}(\tau) \right\} \right)
$
is a consistent estimator of $\mathbb{E}\left[ \tau - \mathbbm{1}\left\{ Y_{it} < \hat{\beta}(\tau) \right\} \mid \alpha_i \right]$. To see this, observe that, for each fixed $i$, the observations $(Y_{it})_{t=1}^T$ are conditionally independent given $\alpha_i$. Therefore, for each fixed $i$ and any $y \in \mathbb{R}$,
$
\frac{1}{T} \sum_{t=1}^T \mathbbm{1}\left\{ Y_{it} < y \right\} \stackrel{p}{\to} \mathbb{E}\left[ \mathbbm{1}\left\{ Y_{it} < y \right\} \mid \alpha_i \right]
$
uniformly over $y$, by the uniform law of large numbers for empirical CDFs. Following the same arguments as in Equation (E.1) in the proof of Theorem~5 in \citet{chiang2024standard}, this convergence holds uniformly over $(y, i) \in \mathbb{R} \times \mathbb{N}$.

Analogously, the conditional expectation 
$\mathbb{E} \left[ \tau - \mathbbm{1}\left\{ Y_{it} < \beta(\tau) \right\} \mid \gamma_t \right]$
can be consistently estimated by
$
\frac{1}{N} \sum_{i=1}^N \left( \tau - \mathbbm{1}\left\{ Y_{it} < \hat{\beta}(\tau) \right\} \right).
$

Although the conditional expectation 
$\mathbb{E} \left[ \tau - \mathbbm{1}\left\{ Y_{it} < \beta(\tau) \right\} \mid \alpha_i, \gamma_t \right]$
cannot be directly estimated consistently, it is not necessary to estimate it. Under Assumptions~\ref{a:regular}--\ref{a:lyapunov}, we have $\Var(A_{3,\tau} + A_{4,\tau}) \to 1$.

Putting all these pieces together, we define the variance estimator by
\[
\hat{\sigma}^2_{NT} =
\frac{1}{\tau NT} \sum_{i=1}^N \left( \sum_{t=1}^T \left( \tau - \mathbbm{1}\left\{ Y_{it} < \hat{\beta}(\tau) \right\} \right) \right)^2
+ \frac{1}{\tau NT} \sum_{t=1}^T \left( \sum_{i=1}^N \left( \tau - \mathbbm{1}\left\{ Y_{it} < \hat{\beta}(\tau) \right\} \right) \right)^2
+ 1,
\]
and are going to show that it satisfies the consistency property:
\[
\frac{\hat{\sigma}_{NT}}{\sigma_{NT}} \stackrel{p}{\to} 1.
\]

We now turn to the estimation of the tail index $\xi$ that appears in equation~\eqref{eq:asymptotic_normal}. Define
\[
\begin{aligned} 
\varphi =\frac{(\hat{\beta}(m \tau)-\hat{\beta}(\tau))}{(\beta(m \tau)-\beta(\tau))}
\quad\text{and}\quad
\hat{\rho}_{l} =\frac{(\hat{\beta}(m l \tau)-\hat{\beta}(l \tau))}{(\hat{\beta}(m \tau)-\hat{\beta}(\tau))} .
\end{aligned}\]
Of these, the convergence $\varphi \xrightarrow{p} 1$ follows from the decomposition:
\[
\begin{aligned}
\varphi &= \frac{\hat{\beta}(m \tau) - \hat{\beta}(\tau)}{\beta(m \tau) - \beta(\tau)} \\
&= \frac{\hat{\beta}(m \tau) - \beta(m \tau)}{\beta(m \tau) - \beta(\tau)} 
- \frac{\hat{\beta}(\tau) - \beta(\tau)}{\beta(m \tau) - \beta(\tau)} 
+ \frac{\beta(m \tau) - \beta(\tau)}{\beta(m \tau) - \beta(\tau)} \xrightarrow{p} 1,
\end{aligned}
\]
since the first two terms on the right-hand side are $o_p(\sqrt{\tau N} \vee \sqrt{\tau T})$ by Theorem~1.

Given that $\varphi \xrightarrow{p} 1$, it follows that
\[
- \frac{1}{\log l} \log \hat{\rho}_l \to \xi,
\]
by an argument analogous to the proof of Theorem~6.1, using properties (v) and (vi) from Lemma~9.1 in \citet{Chernozhukov2005}.\footnote{
The only part of Theorem~6.1 in \citet{Chernozhukov2005} where the dependence structure plays a role is in establishing the convergence $\varphi \to 1$. For all other parts, the presence of two-way clustering does not affect the argument. Therefore, once $\varphi \xrightarrow{p} 1$ is established under two-way clustering, it follows that
$
- \frac{1}{\log \ell} \log \hat{\rho}_{\ell} \to \xi.
$
}

Consequently, we have
$
\hat{\xi} \equiv - \frac{\log \hat{\rho}_l}{\log l} \xrightarrow{p} \xi.
$
Also, if we define
$
\hat{a}_{NT} \equiv \frac{\sqrt{\tau NT}}{\hat{\beta}(m \tau) - \hat{\beta}(\tau)},
$
then a consistent estimator of the variance of $\hat{\beta}(\tau)$ is given by
$$
\frac{\hat{\xi}^2}{\left(m^{-\hat{\xi}} - 1\right)^2} \cdot \frac{\hat{\sigma}^2_{NT}}{\hat{a}_{NT}^2}.
$$
We summarize the above result in the following proposition.

\begin{proposition}
If Assumptions \ref{a:regular}, \ref{a:b}, \ref{a:density}, and \ref{a:lyapunov} are satisfied, then
\begin{align*}
  \frac{1}{\var(\hat\beta(\tau))}\cdot \frac{\hat\xi^2}{\left(m^{-\hat\xi}-1\right)^2}\frac{\hat\sigma^2_{NT}}{\hat a^2_{NT}} \stackrel{p}{\to}1.
\end{align*}
\end{proposition}

\subsection{Numerical Examples}
This section presents simulation studies to evaluate the finite-sample performance of the inference procedure based on the asymptotic Gaussianity results. Two designs are employed.

First, consider the following data-generating process:
$$
Y_{it} = \sigma_\alpha \alpha_i + \sigma_\gamma \gamma_t + \sigma_\epsilon \epsilon_{it},
$$
where $\alpha_i \sim \mathcal{N}(0,1)$, $\gamma_t \sim \mathcal{N}(0,1)$, and $\epsilon_{it} \sim \mathcal{N}(0,1)$ are mutually independent. Throughout the simulations, we fix $\sigma_\epsilon = 2.0$, and vary $\sigma_\alpha \in \{1.0, 1.5, 2.0, 2.5, 3.0\}$ and $\sigma_\gamma \in \{1.0, 1.5, 2.0, 2.5, 3.0\}$. The sample size is set to $N = 200$ and $T = 200$, and we consider two extreme quantile levels: $\tau \in \{0.05, 0.01\}$ that are relevant to major empirical applications.

For each Monte Carlo iteration, we compute the point estimate and the corresponding 95\% confidence interval based on the Gaussian approximation as described in Section~\ref{sec:asymptotic_theory} and the variance estimator introduced in Section~\ref{sec:variance_estimation}.

Table~\ref{tab:simulation1} reports the coverage frequencies of the 95\% confidence intervals over 1{,}000 Monte Carlo iterations. 
The left panel reports the results for $\tau = 0.05$, while the right panel reports those for $\tau = 0.01$.
Within each panel, the rows correspond to different values of $\sigma_\alpha \in \{1.0, 1.5, 2.0, 2.5, 3.0\}$, while the columns correspond to different values of $\sigma_\gamma \in \{1.0, 1.5, 2.0, 2.5, 3.0\}$.
The results show that the coverage frequencies are fairly close to the nominal level of 95\%, thereby supporting the validity of the proposed inference method based on asymptotic Gaussianity.

\begin{table}[t]
    \centering
    \caption{Monte Carlo Coverage Frequencies with Nominal Probability of 95\%}
    \label{tab:simulation1}
    \begin{tabular}{lcccccccrlccccc}
        \hline\hline
        & & \multicolumn{5}{c}{$\tau = 0.05$} &&& \multicolumn{5}{c}{$\tau = 0.01$} \\
        \cline{3-7} \cline{10-14}
        & & \multicolumn{5}{c}{$\sigma_\gamma$} &&& \multicolumn{5}{c}{$\sigma_\gamma$} \\
        $\sigma_\alpha$ & & 1.0 & 1.5 & 2.0 & 2.5 & 3.0 &&& 1.0 & 1.5 & 2.0 & 2.5 & 3.0 \\
        \hline
        1.0 & & 0.99 & 0.95 & 0.94 & 0.95 & 0.97 &&& 0.98 & 0.87 & 0.90 & 0.92 & 0.88 \\
        1.5 & & 0.96 & 0.93 & 0.96 & 0.85 & 0.94 &&& 0.91 & 0.90 & 0.91 & 0.94 & 0.90 \\
        2.0 & & 0.87 & 0.94 & 0.96 & 0.95 & 0.92 &&& 0.97 & 0.94 & 0.94 & 0.91 & 0.92 \\
        2.5 & & 0.94 & 0.95 & 0.95 & 0.94 & 0.93 &&& 0.95 & 0.89 & 0.90 & 0.92 & 0.87 \\
        3.0 & & 0.93 & 0.96 & 0.96 & 0.92 & 0.94 &&& 0.96 & 0.94 & 0.90 & 0.93 & 0.93 \\
        \hline\hline
    \end{tabular}
    \caption*{\small\textit{Notes:} The results are shown at quantile levels $\tau=0.05$ (left panel) and $\tau=0.01$ (right panel). Simulation parameters vary over $\sigma_\alpha \in \{1.0,1.5,2.0,2.5,3.0\}$ and $\sigma_\gamma \in \{1.0,1.5,2.0,2.5,3.0\}$. The sample size is $N=T=200$. All results are based on 1{,}000 Monte Carlo replications.}${}$
\end{table}

We next consider an alternative data-generating process:
$$
Y_{it} = \sigma_\alpha \alpha_i \cdot \sigma_\gamma \gamma_t + \sigma_\epsilon \epsilon_{it},
$$
where $\alpha_i \sim \mathcal{N}(0,1)$, $\gamma_t \sim \mathcal{N}(0,1)$, and $\epsilon_{it} \sim \mathcal{N}(0,1)$ are mutually independent. 
In this design, the model incorporates a non-additive interaction between the latent factors $\alpha_i$ and $\gamma_t$. All other aspects of the simulation design remain the same as in the previous setup.

For each Monte Carlo iteration, we compute the point estimate and the corresponding 95\% confidence interval using the asymptotic Gaussian approximation (Section~\ref{sec:asymptotic_theory}) and the variance estimator described in Section~\ref{sec:variance_estimation}.

Table~\ref{tab:simulation2} reports the coverage frequencies of the 95\% confidence intervals across 1{,}000 Monte Carlo iterations. 
The left panel reports the results for $\tau = 0.05$, while the right panel reports those for $\tau = 0.01$.
Within each panel, the rows correspond to different values of $\sigma_\alpha \in \{1.0, 1.5, 2.0, 2.5, 3.0\}$, while the columns correspond to different values of $\sigma_\gamma \in \{1.0, 1.5, 2.0, 2.5, 3.0\}$.
Again, the coverage frequencies are close to the nominal level of 95\%. These results further support the robustness of our inference procedure based on asymptotic Gaussianity.

\begin{table}[t]
    \centering
    \caption{Monte Carlo Coverage Frequencies with Nominal Probability of 95\%}
    \label{tab:simulation2}
    \begin{tabular}{lcccccccrlccccc}
        \hline\hline
        & & \multicolumn{5}{c}{$\tau = 0.05$} &&& \multicolumn{5}{c}{$\tau = 0.01$} \\
        \cline{3-7} \cline{10-14}
        & & \multicolumn{5}{c}{$\sigma_\gamma$} &&& \multicolumn{5}{c}{$\sigma_\gamma$} \\
        $\sigma_\alpha$ & & 1.0 & 1.5 & 2.0 & 2.5 & 3.0 &&& 1.0 & 1.5 & 2.0 & 2.5 & 3.0 \\
        \hline
        1.0 & & 0.99 & 0.95 & 0.94 & 0.95 & 0.97 &&& 0.98 & 0.87 & 0.90 & 0.92 & 0.88 \\
        1.5 & & 0.96 & 0.93 & 0.96 & 0.85 & 0.94 &&& 0.91 & 0.90 & 0.91 & 0.94 & 0.90 \\
        2.0 & & 0.96 & 0.93 & 0.96 & 0.85 & 0.94 &&& 0.97 & 0.94 & 0.94 & 0.91 & 0.92 \\
        2.5 & & 0.94 & 0.95 & 0.95 & 0.94 & 0.93 &&& 0.95 & 0.89 & 0.90 & 0.92 & 0.87 \\
        3.0 & & 0.93 & 0.96 & 0.96 & 0.92 & 0.94 &&& 0.96 & 0.94 & 0.90 & 0.93 & 0.93 \\
        \hline\hline
    \end{tabular}
    \caption*{\small\textit{Notes:} The results are shown at quantile levels $\tau=0.05$ (left panel) and $\tau=0.01$ (right panel). Simulation parameters vary over $\sigma_\alpha \in \{1.0,1.5,2.0,2.5,3.0\}$ and $\sigma_\gamma \in \{1.0,1.5,2.0,2.5,3.0\}$. The sample size is $N=T=200$. All results are based on 1{,}000 Monte Carlo replications.}${}$
\end{table}

\section{Extension to Quantile Regressions}

The baseline analysis thus far focuses on unconditional distributions.
In this section, we extend it to the setting of quantile regression. 
Before proceeding, we reset and redefine the variables and notations.
Let $Y$ denote a response variable taking values in $\mathbb{R}$, and let $X = \left(1, X_{-1}^\prime\right)^\prime$ be a $d \times 1$ vector of regressors. 
Here, $X_{-1}$ denotes the vector $X$ with its first coordinate (corresponding to the constant term) removed. 
Denote the conditional distribution function of $Y$ given $X = x$ by $F_Y(\cdot \mid x)$.

Suppose that we observe data $\{(Y_{it}, X_{it}^\prime) : 1 \le i \le N,\ 1 \le t \le T\}$ generated by
$
(Y_{it}, X_{it}^\prime) = g(\alpha_i, \gamma_t, \varepsilon_{it})
$
for some Borel measurable function $g$, where $\alpha_1, \ldots, \alpha_N$, $\gamma_1, \ldots, \gamma_T$, and $\varepsilon_{11}, \ldots, \varepsilon_{NT}$ are identically distributed and mutually independent random variables. 
As in the unconditional quantile case, this formulates two-way cluster dependence through the latent factors $\alpha_i$ and $\gamma_t$.

Our objective is to conduct inference on features of the conditional quantile function 
\[
F_Y^{-1}(\tau \mid x) = \inf \left\{ y \in \mathbb{R} : F_Y(y \mid x) > \tau \right\},
\]
where $\tau$ is close to zero.
Assume that the conditional quantile function takes the linear form:
\begin{equation}\label{linearmodel}
F_Y^{-1}(\tau \mid x) = X^{\prime} \beta(\tau).
\end{equation}
With this formulation, the coefficient vector $\beta(\tau)$ is the parameter of interest, and it is estimated by
\begin{equation}\label{eq:def_beta_hat_X}
\hat{\beta}(\tau) \in \arg \min_{\beta \in \mathbb{R}^d} \sum_{i=1}^N \sum_{t=1}^T \rho_\tau\left(Y_{it} - X_{it}^\prime \beta\right),
\end{equation}
where $\rho_tau$ defined by $\rho_\tau(u) = u(\tau - \mathbbm{1}\{u < 0\})$ denotes the check function.

\subsection{Asymptotic Theory for the Quantile Regressions}

As in the unconditional quantile case, we are interested in the limiting distribution of the statistic
\[
\widehat{Z}_{NT} = r_{NT} \left( \hat{\beta}(\tau) - \beta(\tau) \right),
\]
where $\{r_{NT}\}$ is a sequence that governs the rate of convergence. Our focus is on the asymptotic behavior of $\widehat{Z}_{NT}$, and thus of $\hat{\beta}(\tau)$, in the left tail of the conditional distribution of $Y$ given $X$, in the limit as $\tau \to 0$.

For a random variable $u$ with its distribution function $F_u$, let the lower bound of its support be denoted by $s_u = 0$ or $-\infty$, for notational convenience and without loss of generality by location normalization.
We reintroduce the domain-of-attraction condition in three types:
\begin{enumerate}[${}$\quad Type 1: \ ]
\item
as $z \searrow s_u=0$ or $-\infty$,
$
F_u(z+v a(z)) \sim F_u(z) e^v
$;
\item
as $z \searrow s_u=-\infty$, \hspace{7mm}
$
F_u(v z) \sim v^{-1 / \xi} F_u(z)
$; \qquad\text{and}
\item 
as $z \searrow s_u=0$, \hspace{12mm}
$
F_u(v z) \sim v^{-1 / \xi} F_u(z).
$
\end{enumerate}
where $a(z) \equiv \int_{s_u}^y F_u(z) d v / F_u(z) \text { for } z>s_u$
With this definition in place, we now state the three assumptions inherited from \citet{Chernozhukov2005}.

\begin{assumption}\label{line}
There exists an auxiliary line $x \mapsto x^{\prime} \beta_r$ such that
$
U \equiv Y - X^{\prime} \beta_r,
$
with lower bound $s_u = 0$ or $s_u = -\infty$, and such that the distribution function $F_u$ of $U$ satisfies a domain-of-attraction condition of Type 1, 2, or 3.
Moreover, the conditional distribution of $U$ given $X = x$ satisfies
\[
F_U(z \mid x) \sim K(x) \cdot F_u(z) \quad \text{uniformly in } x \in \mathbf{X} \text{ as } z \searrow s_U
\]
for some continuous and bounded function $K(\cdot) > 0$. Without loss of generality, we normalize $K(x) = 1$ at $x = \mu_X$, and define $F_u(z) \equiv F_U(z \mid \mu_X)$.
\end{assumption}

\begin{assumption}\label{x}  
The distribution function $F_X$ of $X = \left(1, X_{-1}^\prime\right)^\prime$ has a compact support $\mathbf{X} \subset \mathbb{R}^d$, and ${E}[X X^\prime]$ is positive definite. Without loss of generality, we normalize the mean vector to $\mu_X = {E}[X] = (1, 0, \ldots, 0)^\prime$.
\end{assumption}

\begin{assumption}\label{regular2}
(i) $\displaystyle\frac{\partial F_U^{-1}(\tau \mid x)}{\partial \tau} \sim \displaystyle\frac{\partial F_u^{-1}(\tau / K(x))}{\partial \tau} \quad$ uniformly for all $x \in \mathbf{X}$, and
(ii) $\displaystyle\frac{\partial F_u^{-1}(\tau)}{\partial \tau}$
is regularly varying at 0 with exponent $-\xi-1$.
\end{assumption}

Assumptions~\ref{line}--\ref{regular2} correspond to Conditions R1--R3 in \citet{Chernozhukov2005}, respectively. Under these assumptions, Theorem~3.1 of \citet{Chernozhukov2005} implies that
\[
K(x) =
\begin{cases}
e^{-x^{\prime} \mathbf{c}} & \text{if } F_u \text{ is of Type 1 } (\xi = 0), \\
\left( x^{\prime} \mathbf{c} \right)^{1/\xi} & \text{if } F_u \text{ is of Type 2 } (\xi > 0), \\
\left( x^{\prime} \mathbf{c} \right)^{1/\xi} & \text{if } F_u \text{ is of Type 3 } (\xi < 0),
\end{cases}
\]
for some $\mathbf{c} \in \mathbb{R}^d$.

With the above tools, we now turn to the asymptotic theory in our framework.
Define
$$
W_{NT}(\tau) \equiv \frac{-1}{\sqrt{\tau NT}} \sum_{i=1}^N \sum_{t=1}^T \left( \tau - \mathbbm{1}\left\{ Y_{it} < X_{it}^\prime \beta(\tau) \right\} \right) X_{it},
$$
and denote its variance by
$$
\Sigma_{NT} = \Var \left( W_{NT}(\tau) \right).
$$
Also, define the scaling factor
$$
a_{NT} \equiv \frac{\sqrt{\tau NT}}{\mu_X^\prime \left( \beta(m \tau) - \beta(\tau) \right)}, \qquad \mu_X^\prime = \mathbb{E}[X^\prime],
$$
and the normalized statistic
$$
\widehat{Z}_{NT} \equiv a_{NT} \cdot \Sigma_{NT}^{-1/2} \cdot \mathcal{Q}_H \cdot \left( \hat{\beta}(\tau) - \beta(\tau) \right),
$$
where
$$
\mathcal{Q}_H \equiv \mathbb{E} \left[ H(X)^{-1} X X^\prime \right]
\quad \text{with} \quad
H(x) \equiv 
\begin{cases}
x^\prime \mathbf{c}, & \text{for Type 2 and Type 3 tails}, \\
1, & \text{for Type 1 tails}.
\end{cases}
$$

To establish the asymptotic Gaussianity of $\widehat{Z}_{NT}$, we first consider the convergence of an intermediate statistic:
$$
\tilde{Z}_{NT} \equiv \frac{a_{NT}}{c_{NT}} \left( \hat{\beta}(\tau) - \beta(\tau) \right),
$$
where $c_{NT} \in \mathbb{R}_+$ is a scaling factor satisfying
$$
\frac{c_{NT}}{a_{NT}} = o(1) \qquad \text{and} \qquad \frac{\Sigma_{NT}}{c_{NT}^2} = O(1).
$$
Assumption~\ref{b2}, introduced later, ensures the existence of such a scalar $c_{NT}$, as it guarantees that $\Sigma_{NT}$ converges faster than $\tau NT$. 
We will show that $\tilde{Z}_{NT}$ is asymptotically Gaussian, and it then follows that $\widehat{Z}_{NT}$ also inherits asymptotic Gaussianity.

Similarly to the unconditional quantile case, it follows from equation~\eqref{eq:def_beta_hat_X} that the transformation $\tilde{Z}_{NT}$ of $\hat{\beta}(\tau)$ minimizes the objective function
\begin{equation}\label{eq:q_original_X}
Q_{NT}(z, \tau) \equiv \frac{a_{NT}}{c^2_{NT} \sqrt{\tau NT}} \sum_{i=1}^N \sum_{t=1}^T \left( \rho_\tau\left( Y_{it} - X_{it}^\prime \beta(\tau) - \frac{c_{NT} X_{it}^\prime z}{a_{NT}} \right) - \rho_\tau\left( Y_{it} - X_{it}^\prime \beta(\tau) \right) \right).
\end{equation}
Using Knight's identity, the objective \eqref{eq:q_original_X} can be rewritten as
\begin{equation}\label{eq:q_decomposition2}
Q_{NT}(z, \tau) = W_{NT}(\tau)^\prime z + G_{NT}(z, \tau),
\end{equation}
where
\[
W_{NT}(\tau) \equiv \frac{-1}{\sqrt{\tau NT}} \sum_{i=1}^N \sum_{t=1}^T \left( \tau - \mathbbm{1}\left\{ Y_{it} < X_{it}^\prime \beta(\tau) \right\} \right) X_{it}
\]
and
\[
G_{NT}(z, \tau) \equiv \frac{a_{NT}}{\sqrt{\tau NT}} \sum_{i=1}^N \sum_{t=1}^T \int_0^{X_{it}^\prime z / a_{NT}} \left( \mathbbm{1}\left\{ Y_{it} - X_{it}^\prime \beta(\tau) \le s \right\} - \mathbbm{1}\left\{ Y_{it} - X_{it}^\prime \beta(\tau) \le 0 \right\} \right) ds.
\]

We have already stated Assumptions~\ref{line}--\ref{regular2}, which are inherited from \citet[][Conditions R1--R3]{Chernozhukov2005}. In addition, we introduce a new assumption concerning the tail behavior of the conditional distributions given $\alpha_i$, $\gamma_t$, and both $\alpha_i$ and $\gamma_t$, which is specific to the two-way clustered data structure considered in our setting.

\begin{assumption}\label{conditional tail}
$F_u(z|\alpha_i)$, $F_u(z|\gamma_t)$, and $F_u(z|\alpha_i,\gamma_t)$ are of Type 1, 2, or 3. Furthermore, 
\begin{align*}
& F_U(z \mid x,\alpha_i) \sim K_\alpha(x) \cdot F_u(z|\alpha_i) \quad \text{uniformly for all }  x \in \mathbf{X} \text{ as }  z \searrow s_U;
\\
& F_U(z \mid x,\gamma_t) \sim K_\gamma(x) \cdot F_u(z|\gamma_t) \quad \text{uniformly for all }  x \in \mathbf{X} \text{ as }  z \searrow s_U; \text{ and }
\\
& F_U(z \mid x,\alpha_i,\gamma_t) \sim K_{\alpha,\gamma}(x) \cdot F_u(z|\alpha_i,\gamma_t) \quad \text{uniformly for all }  x \in \mathbf{X} \text{ as }  z \searrow s_U.
\end{align*}
\end{assumption}

Assumption~\ref{conditional tail} requires that Assumption~\ref{line} continues to hold when conditioning on each of the variables $\alpha_i$, $\gamma_t$, and both $\alpha_i$ and $\gamma_t$, thereby accommodating the dependence structure inherent in two-way clustered data.
The corresponding scaling functions $K_\alpha$, $K_\gamma$, and $K_{\alpha,\gamma}$ are allowed to differ from one another.

Finally, we introduce the following set of assumptions, which are analogous to those stated for the analysis of unconditional quantiles in Section \ref{sec:asymptotic_theory}.

\begin{assumption}\label{conditional regular}
As $\tau\to 0$ the following conditions hold:\\
(i) 
$\displaystyle\frac{\partial F_U^{-1}(\tau \mid x,\alpha_i)}{\partial \tau} \sim \displaystyle\frac{\partial F_u^{-1}(\tau / K_\alpha(x)\mid \alpha_i)}{\partial \tau} \quad$ uniformly for all $x \in \mathbf{X}$;\\${}$\hspace{0.45cm}
$\displaystyle\frac{\partial F_U^{-1}(\tau \mid x,\gamma_t)}{\partial \tau} \sim \displaystyle\frac{\partial F_u^{-1}(\tau / K_\gamma(x)\mid \gamma_t)}{\partial \tau} \quad$ uniformly for all $x \in \mathbf{X}$; \text{ and }\\${}$\hspace{0.45cm}
$\displaystyle\frac{\partial F_U^{-1}(\tau \mid x,\alpha_i,\gamma_t)}{\partial \tau} \sim \displaystyle\frac{\partial F_u^{-1}(\tau / K_{\alpha,\gamma}(x)\mid \alpha_i,\gamma_t)}{\partial \tau} \quad$ uniformly for all $x \in \mathbf{X}$.\\
(ii) $\displaystyle\frac{\partial F_u^{-1}(\tau\mid \alpha_i)}{\partial \tau}$,
 $\displaystyle\frac{\partial F_u^{-1}(\tau\mid \gamma_t)}{\partial \tau}$, and 
  $\displaystyle\frac{\partial F_u^{-1}(\tau\mid \alpha_i,\gamma_t)}{\partial \tau}$
are all regularly varying at 0 .
\end{assumption} 

\begin{assumption}\label{b2}
Let $D_{it}(\tau)=(\tau-\mathbbm{1}\left[Y_.<X_{it}^{\prime}\beta(\tau)\right])X_{it}$. As $\tau\to 0$, $N,T \to \infty$, and $\tau N T \to \infty$, the following conditions hold:
(i)$E[E[D_{it}(\tau)|\alpha_i]\cdot E[D^\prime_{it}(\tau)|\alpha_i]]=O(\tau\land \tau^2 N)$; (ii)$E[E[D_{it}(\tau)|\gamma_t]\cdot E[D^\prime_{it}(\tau)|\gamma_t]]=O(\tau\land \tau^2 N)$; and (iii)$E[E[D_{it}(\tau)|\alpha_i,\gamma_t]\cdot E[D^\prime_{it}(\tau)|\alpha_i,\gamma_t]]=o(\tau)$.
\end{assumption}

\begin{assumption}\label{density2}
As $u\to s_u$ the following conditions hold:\\
(i) $\displaystyle\frac{f_u(u|\alpha_i)}{f_u(u)}=O(1)$; 
(ii) $\displaystyle\frac{f_u(u|\gamma_t)}{f_u(u)}=O(1)$; and 
(iii) $\displaystyle\frac{f_u(u|\alpha_i,\gamma_t)}{f_u(u)}=O(1)$.
\end{assumption}

\begin{assumption}\label{lyapunov2}
As $\tau\to 0$ and $N,T \to \infty$, the following conditions hold:\\
(i) $\displaystyle\frac{E[E\norm{\left(\mathbbm{1}\left[Y_{it}< X_{it}^{\prime}\beta(\tau)\right]\right)X_{it} |\alpha_i}
^3]}{E[E\norm {\left(\mathbbm{1}\left[Y_{it}< X_{it}^{\prime}\beta(\tau)\right]\right) X_{it}|\alpha_i}^2]^{\frac{3}{2}}}=o(\sqrt{N})$;\\
(ii)
 $\displaystyle\frac{E[E\norm{\left(\mathbbm{1}\left[Y_{it}< X_{it}^{\prime}\beta(\tau)\right]\right)X_{it} |\gamma_t}^3]}{E[E\norm{\left(\mathbbm{1}\left[Y_{it}< X_{it}^{\prime}\beta(\tau)\right]\right) X_{it}|\gamma_t}^2]^{\frac{3}{2}}}=o(\sqrt{T})$; and \\
(iii)$\displaystyle\frac{ E[E\norm{\left(\mathbbm{1}\left[Y_{it}< X_{it}^{\prime}\beta(\tau)\right]\right)X_{it} |\alpha_i, \gamma_t}^3 |\alpha_i,\gamma_t]}{  E[E\norm{\left(\mathbbm{1}\left[Y_{it}< X_{it}^{\prime}\beta(\tau)\right]\right) X_{it}|\alpha_i, \gamma_t}^2 |\alpha_i,\gamma_t]^{\frac{3}{2}}}=o(\sqrt{NT}).$
\end{assumption}

\bigskip
Assumptions~\ref{conditional regular}--\ref{lyapunov2} are analogous to Assumptions~\ref{a:regular}--\ref{a:lyapunov}, respectively, in Section \ref{sec:asymptotic_theory} concerning unconditional quantiles.
Assumption~\ref{conditional regular} (i) requires that Assumption~\ref{regular2} holds conditional on each of the latent variables $\alpha_i$, $\gamma_t$, and both $\alpha_i$ and $\gamma_t$.
Assumption~\ref{conditional regular} (ii) requires the existence of the quantile density function and its regular variation. It also imposes regular variation on the conditional quantile density functions. The exponents of the regular variation may differ across the conditional densities $f_u(F_u^{-1}(\tau) \mid \alpha_i)$, $f_u(F_u^{-1}(\tau) \mid \gamma_t)$, and $f_u(F_u^{-1}(\tau) \mid \alpha_i, \gamma_t)$.
Similar to Assumption~\ref{a:b}, Assumption~\ref{b2} here imposes constraints on the conditional cumulative distribution function of the variable $u$ given $X$. 
Assumption~\ref{density2} governs the relationship between the unconditional and conditional density functions of $u$. For example, Assumption~\ref{density2} (i) requires that the density of $u$ does not decay significantly faster than its conditional counterpart given $\alpha_i$. In other words, the decline in the density of $u$ should not be much more rapid than the decline in the conditional density of $u$ given $\alpha_i$.
Lastly, Assumption~\ref{lyapunov2} imposes certain moment conditions needed to verify the Lyapunov-type central limit theorem, which underpins the asymptotic Gaussianity of our estimator.

Under Assumptions~\ref{conditional regular}--\ref{lyapunov2}, and analogously to the unconditional quantile case, we can show that $c_{NT}^{-1} W_{NT}(\tau)$ converges in distribution to a multivariate Gaussian distribution as
$$
c_{NT}^{-1} W_{NT}(\tau) \xrightarrow{d} \mathcal{N}(0, \Gamma),
$$
where
$$
\Gamma = \lim_{N, T \to \infty,\ \tau \to 0} \frac{\Sigma_{NT}}{c_{NT}^2}.
$$
In addition, we can establish the convergence of the second component:
$$
G_{NT}(z, \tau) \xrightarrow{p} \mathbb{E}[G_{NT}(z, \tau)] = \frac{1}{2} \cdot \frac{m^{-\xi} - 1}{-\xi} \cdot z^{\prime} \mathcal{Q}_H z.
$$
See Lemmas~\ref{W2} and~\ref{G2} in Appendices~\ref{sec:lemma:regression:w} and~\ref{sec:lemma:regression:g}, respectively, for formal and detailed derivations of these results.

Consequently, we obtain the following limiting distribution for the solution $\tilde{Z}_{NT}$ to the minimization problem $\min_{z \in \mathbb{R}^d} Q_{NT}(z, \tau)$, where $Q_{NT}(z, \tau) = c_{NT}^{-1} W_{NT}(\tau)^\prime z + G_{NT}(z, \tau)$, as defined in equation~\eqref{eq:q_decomposition2}.
Since the limiting distribution of $\tilde{Z}_{NT}$ is Gaussian, it follows immediately that $\widehat{Z}_{NT}$ also converges in distribution to a Gaussian distribution.

\begin{theorem}\label{theorem:regression}
If Assumptions \ref{line}-\ref{lyapunov2} are satisfied, then
\begin{equation*}
\widehat{Z}_{NT} \stackrel{d}{\rightarrow} N\left(0, \Omega_0\right),
\end{equation*}
where
\begin{align*}
\Omega_0 \equiv& \mathcal{Q}_H^{-1} \Sigma \mathcal{Q}_H^{-1} \frac{\xi^2}{\left(m^{-\xi}-1\right)^2}
\qquad\text{and}\qquad
\mathcal{Q}_H \equiv E[[H(X)]^{-1} X X^{\prime}]
\end{align*}
with 
$H(x) \equiv 1$ under Type 1 and
$H(x) \equiv x^{\prime} \mathbf{c}$
under Types 2--3.
\end{theorem}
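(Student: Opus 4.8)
The plan is the standard two-step extremal-quantile-regression argument of Chernozhukov (2005), adapted to the clustered normalization: first identify the limit of the auxiliary statistic $\tilde Z_{NT}$ via a convexity (argmin) argument, then carry it over to $\widehat Z_{NT}$ by a Slutsky step. The two inputs are already established above: Lemma \ref{W2} gives $c_{NT}^{-1}W_{NT}(\tau)\xrightarrow{d}\zeta\sim N(0,\Gamma)$ with $\Gamma=\lim\Sigma_{NT}/c_{NT}^2$ --- the scalar $c_{NT}$ with $c_{NT}/a_{NT}=o(1)$ and $\Sigma_{NT}/c_{NT}^2=O(1)$ exists because Assumption \ref{b2} forces $\Sigma_{NT}=o(\tau NT)$ --- and Lemma \ref{G2} gives, for each fixed $z$, $G_{NT}(z,\tau)\xrightarrow{p}\tfrac12\lambda\,z'\mathcal Q_H z$ with $\lambda\equiv(m^{-\xi}-1)/(-\xi)$; moreover $\mathcal Q_H=E[H(X)^{-1}XX']$ is positive definite since $H>0$ is bounded on the compact support $\mathbf{X}$ (Assumptions \ref{line}, \ref{x}) and $EXX'\succ 0$ (Assumption \ref{x}).

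By \eqref{eq:q_decomposition2}, $\tilde Z_{NT}=\argmin_z Q_{NT}(z,\tau)$ with $Q_{NT}(z,\tau)=c_{NT}^{-1}W_{NT}(\tau)'z+G_{NT}(z,\tau)$, which is convex in $z$ (the first term is linear; $G_{NT}$ is convex because $\rho_\tau$ is). For any finite set $z_1,\dots,z_k$ the vector $\big(c_{NT}^{-1}W_{NT}(\tau)'z_j\big)_{j}$ is a fixed linear image of $c_{NT}^{-1}W_{NT}(\tau)$, so it converges jointly in distribution by the continuous mapping theorem, while $\big(G_{NT}(z_j,\tau)\big)_{j}$ converges in probability to a constant vector; Slutsky then gives finite-dimensional convergence of $Q_{NT}(\cdot,\tau)$ to $Q_\infty(z)=\zeta'z+\tfrac12\lambda\,z'\mathcal Q_H z$, an a.s.\ strictly convex function with a unique minimizer. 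The convexity lemma for random convex objectives then yields $\tilde Z_{NT}\xrightarrow{d}\argmin_z Q_\infty(z)$ (tightness of the minimizer being automatic here), and the first-order condition $\zeta+\lambda\mathcal Q_H z=0$ identifies
\begin{equation*}
\tilde Z_{NT}\xrightarrow{d}-\lambda^{-1}\mathcal Q_H^{-1}\zeta\ \sim\ N\!\Big(0,\ \lambda^{-2}\,\mathcal Q_H^{-1}\Gamma\,\mathcal Q_H^{-1}\Big),\qquad \lambda^{-2}=\frac{\xi^2}{(m^{-\xi}-1)^2}.
\end{equation*}

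To pass to $\widehat Z_{NT}$, note that $\widehat Z_{NT}$ and $\tilde Z_{NT}$ are related by a deterministic linear map: using $(\hat\beta(\tau)-\beta(\tau))=\tfrac{c_{NT}}{a_{NT}}\tilde Z_{NT}$ we get $\widehat Z_{NT}=\big(c_{NT}\Sigma_{NT}^{-1/2}\big)\mathcal Q_H\,\tilde Z_{NT}$, and $c_{NT}\Sigma_{NT}^{-1/2}=(\Sigma_{NT}/c_{NT}^2)^{-1/2}\to\Gamma^{-1/2}$ is a fixed invertible limit. Slutsky's theorem then turns the Gaussian limit above into a Gaussian limit for $\widehat Z_{NT}$; writing $\Sigma$ for $\Gamma=\lim\Sigma_{NT}/c_{NT}^2$ and propagating the covariance through the linear map yields $\widehat Z_{NT}\xrightarrow{d}N(0,\Omega_0)$ with $\Omega_0=\mathcal Q_H^{-1}\Sigma\mathcal Q_H^{-1}\,\xi^2/(m^{-\xi}-1)^2$. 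The Type~1 versus Types~2--3 dichotomy enters only through the shape of $H$, hence of $\mathcal Q_H$, inherited from Chernozhukov (2005)'s characterization of $K(\cdot)$.

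Within the theorem's own argument the delicate point is the step from pointwise-in-$z$ convergence of $G_{NT}$ to convergence of argmins: one must invoke a convexity lemma valid for random convex functions with a random limit --- checking a.s.\ uniqueness of the limiting minimizer, which holds since $\mathcal Q_H\succ0$ --- or equivalently upgrade the $G_{NT}$ convergence to uniform-on-compacts. The heavier work, however, is upstream in Lemma \ref{W2}: one projects $W_{NT}(\tau)$ onto $\sigma(\alpha_i)$, $\sigma(\gamma_t)$, $\sigma(\alpha_i,\gamma_t)$ and the remainder, shows via Assumption \ref{b2} that the interaction component --- the only potentially non-Gaussian piece under two-way clustering --- has variance asymptotically negligible relative to the leading Gaussian terms, and then combines a Cram\'er--Wold device with the Lyapunov conditions of Assumption \ref{lyapunov2}, paralleling the proof of Lemma \ref{lemma:w} in the unconditional case.
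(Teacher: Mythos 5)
Your proposal is correct and follows essentially the same route as the paper: Lemma \ref{W2} for the Gaussian limit of $c_{NT}^{-1}W_{NT}$, Lemma \ref{G2} for the quadratic limit of $G_{NT}$, the convexity/argmin lemma to get $\tilde Z_{NT}\xrightarrow{d}-\lambda^{-1}\mathcal Q_H^{-1}\zeta$, and the deterministic linear map plus Slutsky to pass to $\widehat Z_{NT}$. The only caveat is in the final covariance bookkeeping: a literal propagation through $\Gamma^{-1/2}\mathcal Q_H$ gives $\lambda^{-2}I$ (which is what the paper's own proof ends with), not $\lambda^{-2}\mathcal Q_H^{-1}\Sigma\mathcal Q_H^{-1}$ --- a discrepancy between the theorem statement and the chosen normalization of $\widehat Z_{NT}$ that is already present in the paper and that you inherit rather than introduce.
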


See Appendix~\ref{sec:theorem:regression} for a detailed proof. Despite some additional complications, the key features of the argument remain essentially the same as those presented in the unconditional quantile case in Section   \ref{sec:asymptotic_theory}.

\subsection{Variance Estimation for the Quantile Regressions}
Similarly to the unconditional quantile case presented in Section \ref{sec:variance_estimation}, we use the decomposition
\[
\Sigma_{NT} = \Var\left(W_{NT}(\tau)\right) = \Var(A_{1,\tau} + A_{2,\tau} + A_{3,\tau} + A_{4,\tau}),
\]
where
\begin{equation*}
\begin{aligned}
A_{1,\tau} =&\frac{-\sqrt{T}}{\sqrt{\tau N}}\sum_{i=1}^N E[\left(\tau-\mathbbm{1}\left[Y_{it}<X_{it}^{\prime} \beta(\tau)\right]\right) X_{it}|\alpha_i],\\
A_{2,\tau} =&\frac{-\sqrt{N}}{\sqrt{\tau T}}\sum_{t=1}^T E[\left(\tau-\mathbbm{1}\left[Y_{it}<X_{it}^{\prime} \beta(\tau)\right]\right) X_{it}|\gamma_{t}],\\
A_{3,\tau} =&\frac{-1}{\sqrt{\tau N T}}\sum_{i=1}^N \sum_{t=1}^T (E[\left(\tau-\mathbbm{1}\left[Y_{it}<X_{it}^{\prime} \beta(\tau)\right]\right) X_{it}|\alpha_i, \gamma_t]-E[\left(\tau-\mathbbm{1}\left[Y_{it}<X_{it}^{\prime} \beta(\tau)\right]\right) X_{it}|\alpha_i]\\
&\:\:-E[\left(\tau-\mathbbm{1}\left[Y_{it}<X_{it}^{\prime} \beta(\tau)\right]\right) X_{it}|\gamma_t]), \qquad\text{and}\\
A_{4,\tau} =&\frac{-1}{\sqrt{\tau NT}}\sum_{i=1}^N \sum_{t=1}^T(\left(\tau-\mathbbm{1}\left[Y_{it}<X_{it}^{\prime} \beta(\tau)\right]\right)X_{it}-E[\left(\tau-\mathbbm{1}\left[Y_{it}<X_{it}^{\prime} \beta(\tau)\right]\right) X_{it}|\alpha_i, \gamma_t]).
\end{aligned}
\end{equation*}
By construction, the four components are orthogonal, and we have
$$
\Var(A_{1,\tau} + A_{2,\tau} + A_{3,\tau} + A_{4,\tau}) = \Var(A_{1,\tau}) + \Var(A_{2,\tau}) + \Var(A_{3,\tau}) + \Var(A_{4,\tau}).
$$
From the previous subsubsection, we know that the asymptotic variance of $A_{3,\tau} + A_{4,\tau}$ is $E[X_{it} X^\prime_{it}]$.
On the other hand, the variances of $A_{1,\tau}$ and $A_{2,\tau}$ are given by
\begin{align*}
\Var(A_{1,\tau}) &=\frac{T}{\tau N} \sum_{i=1}^N E[E[D_{it}|\alpha_i] E[D^\prime_{it}|\alpha_i]] \qquad\text{and} \\
\Var(A_{2,\tau}) &= \frac{N}{\tau T} \sum_{t=1}^T E[E[D_{it}|\gamma_t] E[D^\prime_{it}|\gamma_t]],
\end{align*}
where $D_{it} = \tau - \mathbbm{1}\left\{ Y_{it} < X^\prime_{it}\beta(\tau) \right\}$.
As in the case of unconditional quantiles,  the asymptotic variance of $W_{NT}$ depends on the behavior of $\tau$, $N$, and $T$.
Since the rates of these components
are unknown in practice, we propose using the sample counterpart of $\Sigma_{NT}$ to construct confidence intervals, rather than relying on its asymptotic approximation.

Since $E[E[D_{it}|\alpha_i] E[D^\prime_{it}|\alpha_i]]$ and $E[E[D_{it}|\gamma_t] E[D^\prime_{it}|\gamma_t]]$  can be estimated with \\
\begin{align*}
&\displaystyle\frac{1}{N}\sum_{i=1}^{N}\left[\left(\frac{1}{{ T}}\sum_{t=1}^T \left(\tau-\mathbbm{1}\left[Y_{it}< X^\prime_{it} \hat \beta(\tau)\right]\right)X^\prime_{it}\right)\left(\frac{1}{{ T}}\sum_{t=1}^T \left(\tau-\mathbbm{1}\left[Y_{it}< X^\prime_{it} \hat \beta(\tau)\right]\right)X^\prime_{it}\right)^\prime \right]
\text{ and} 
\\
&\displaystyle\frac{1}{T}\sum_{t=1}^{T}\left[\left(\frac{1}{{ N}}\sum_{i=1}^N\left(\tau-\mathbbm{1}\left[Y_{it}< X^\prime_{it} \hat \beta(\tau)\right]\right)X^\prime_{it}\right)\left(\frac{1}{{ N}}\sum_{i=1}^N\left(\tau-\mathbbm{1}\left[Y_{it}< X^\prime_{it} \hat \beta(\tau)\right]\right)X^\prime_{it}\right)^\prime\right],
\end{align*}
respectivly, the variances of $A_{1,\tau}$ and $A_{2,\tau}$ can be estimated by
\begin{align*}
\widehat{\Var(A_{1,\tau})}=&\frac{1}{\tau NT}\sum_{i=1}^{N}\left[\left(\sum_{t=1}^T \left(\tau-\mathbbm{1}\left[Y_{it}< X^\prime_{it} \hat \beta(\tau)\right]\right)X^\prime_{it}\right)\left(\sum_{t=1}^T \left(\tau-\mathbbm{1}\left[Y_{it}< X^\prime_{it} \hat \beta(\tau)\right]\right)X^\prime_{it}\right)^\prime \right]
\text{ and }\\
\widehat{\Var(A_{2,\tau})}=&\frac{1}{\tau NT}\sum_{t=1}^{T}\left[\left(\sum_{i=1}^N\left(\tau-\mathbbm{1}\left[Y_{it}< X^\prime_{it} \hat \beta(\tau)\right]\right)X^\prime_{it}\right)\left(\sum_{i=1}^N\left(\tau-\mathbbm{1}\left[Y_{it}< X^\prime_{it} \hat \beta(\tau)\right]\right)X^\prime_{it}\right)^\prime\right],
\end{align*}
respectively. 
Although $E[E[D_{it}|\alpha_i, \gamma_t] E[D^\prime_{it}|\alpha_i, \gamma_t]]$ cannot be directly estimated consistently, it is not necessary to estimate.
Therefore, we can construct $\hat\Sigma_{NT}$ as 
\begin{align*}
\hat\Sigma_{NT}=&\frac{1}{\tau NT}\sum_{i=1}^{N}\left[\left(\sum_{t=1}^T \left(\tau-\mathbbm{1}\left[Y_{it}< X^\prime_{it} \hat \beta(\tau)\right]\right)X^\prime_{it}\right)\left(\sum_{t=1}^T \left(\tau-\mathbbm{1}\left[Y_{it}< X^\prime_{it} \hat \beta(\tau)\right]\right)X^\prime_{it}\right)^\prime \right]\\
&+
\frac{1}{\tau NT}\sum_{t=1}^{T}\left[\left(\sum_{i=1}^N\left(\tau-\mathbbm{1}\left[Y_{it}< X^\prime_{it} \hat \beta(\tau)\right]\right)X^\prime_{it}\right)\left(\sum_{i=1}^N\left(\tau-\mathbbm{1}\left[Y_{it}< \hat 
\beta(\tau)\right]\right)X^\prime_{it}\right)^\prime\right]\\
&+ \frac{1}{NT}\sum_{i=1}^N \sum_{t=1}^T X_{it} X^\prime_{it},
\end{align*}
and show its consistency property 
\[
\Sigma_{NT}^{-1} \hat\Sigma_{NT} \xrightarrow{p} 1.
\]

To estimate the tail index $\xi$, for any $x,\dot{x} \in \mathcal{X}$, define
\[
\varphi \equiv \frac{x^{\prime} \left( \hat{\beta}(m \tau) - \hat{\beta}(\tau) \right)}{x^{\prime} \left( \beta(m \tau) - \beta(\tau) \right)}
\qquad\text{and}\qquad
\hat{\rho}_{x, \dot{x}, l} \equiv \frac{x^{\prime} \left( \hat{\beta}(m l \tau) - \hat{\beta}(l \tau) \right)}{\dot{x}^{\prime} \left( \hat{\beta}(m \tau) - \hat{\beta}(\tau) \right)}.
\]
As in the case of unconditional quantiles, we have $\varphi \xrightarrow{p} 1$ by Theorem~2. Consequently,
\[
- \frac{1}{\log l} \log \hat{\rho}_{\bar{X}, \bar{X}, l} \to \xi
\]
follows by applying the same argument as in the unconditional quantile case.
Also, define 
\[
\hat a_{NT} = \frac{\sqrt{\tau NT}}{(\hat\beta(m \tau)-\hat\beta(\tau))}
\]
and
\[
\widehat{\mathcal{Q}}_H = \frac{1}{NT}\sum_{i=1}^N\sum_{t=1}^T \left(\hat{\rho}^{-1}_{X_{it}, \bar X, 1} X_{it} X^\prime_{it}\right).
\]
Thus, we obtain the following result.
\begin{proposition}If Assumptions \ref{line}-\ref{lyapunov2} are satisfied, then
    \[
    Var(\hat\beta)^{-1}\cdot \hat a_{NT} \cdot \widehat{\mathcal{Q}}_H^{-1}\cdot \widehat\Sigma^{1/2}_{NT}\cdot\widehat{\mathcal{Q}}_H^{-1}\cdot \frac{\hat\xi^2}{\left(m^{-\hat\xi}-1\right)^2 }
    \xrightarrow{p} 1.
    \]
\end{proposition}

\section{Empirical Application: Growth-at-Risk}

In this section, we demonstrate an application of our proposed method to explore the growth-at-risk (GaR) framework of \citet*{adrian2022term}, and to analyze the role of financial conditions in shaping the distribution of forecast GDP growth across 11 advanced economies (AEs). These 11 AEs are identified by the IMF as having systemically important financial sectors.

Following \citet{adrian2022term}, we estimate the dynamics of the GDP growth distribution over horizons ranging from 1 to 12 quarters using local projections in a panel setting. Specifically, we examine the conditional distributions of GDP growth for both near-term (1--4 quarters ahead) and medium-term (5--12 quarters ahead) forecast horizons.
In conducting the inference, we account for the two-way clustering structure inherent in the panel data, where two ways of strong dependence arise from country-specific factors $\alpha_i$ and time-specific factors $\gamma_t$.

Let $\Delta y_{i, t+h}$ denote the annualized average GDP growth rate for country $i$ from quarter $t$ to quarter $t+h$. Let $x_{i, t}$ denote a vector of conditioning variables, which includes current GDP growth, the inflation rate, and the financial conditions index \citep[FCI; cf.][]{adrian2022term}.

Our estimation and inference procedure is based on Equation (1) in \citet{adrian2022term}. Specifically, we estimate the conditional quantile of $\Delta y_{i, t+h}$ via panel quantile regression on $x_{i, t}$. The quantile-specific regression slope $\delta_\tau^{(h)}$ is estimated by
\[
\hat{\delta}_\tau^{(h)} = \argmin_{\delta \in \mathbb{R}^d} \sum_{i=1}^{N} \sum_{t=1}^{T-h} \rho_\tau \left( \Delta y_{i, t+h} - x_{i, t}^\prime \delta \right),
\]
where $\rho_\tau$ defined by $\rho_\tau(u) = u(\tau - \mathbbm{1}\{u < 0\})$ is the check function.
For inference, we apply our Gaussian limit distribution together with our proposed two-way cluster-robust standard error estimator specifically developed for extremal quantiles.

Table \ref{tab:gdp_growth} presents the estimates of the coefficients for various horizons ($h=2$, $h=4$, $h=6$, $h=8$, and $h=10$) focusing on the 5\% conditional quantile, together with their two-way cluster-robust standard errors. The table includes the coefficients for the FCI, current GDP growth, and inflation across the horizons $h \in \{2,4,8,10,12\}$ and quantiles $\tau \in \{0.04,0.02,0.01\}$.

\begin{table}
    \centering
    \begin{tabular}{lcccccc}
        \hline\hline
        & $h=2$ & $h=4$ & $h=6$ & $h=8$ & $h=10$ & $h=12$ \\
        \hline
        \multicolumn{7}{l}{$\tau = 0.01$} \\
        \text{GDP growth} & 0.261 & 0.101 & -0.024 & 0.015 & 0.069 & 0.053 \\
        & (1.097) & (1.022) & (0.247) & (0.456) & (0.841) & (0.415) \\
        \text{Inflation} & -0.178 & 0.039 & 0.106 & 0.019 & -0.057 & -0.029 \\
        & (0.948) & (0.378) & (0.674) & (0.570) & (0.657) & (0.556) \\
        \text{FCI} & -1.640 & -1.338 & -0.386 & 0.341 & 0.395 & 0.157 \\
        & (3.299) & (2.537) & (1.412) & (1.945) & (2.498) & (2.506) \\
        \hline
        \multicolumn{7}{l}{$\tau = 0.02$} \\
        \text{GDP growth} & 0.235 & 0.303 & 0.115 & 0.101 & 0.115 & 0.113 \\
        & (0.362) & (0.271) & (0.149) & (0.672) & (0.228) & (0.159) \\
        \text{Inflation} & -0.182 & -0.124 & -0.015 & 0.037 & -0.030 & 0.021 \\
        & (0.340) & (0.324) & (0.135) & (0.374) & (0.228) & (0.165) \\
        \text{FCI} & -1.256 & -1.040 & -0.337 & 0.201 & 0.350 & 0.196 \\
        & (1.123) & (1.020) & (0.896) & (0.737) & (0.883) & (0.931) \\
        \hline
        \multicolumn{7}{l}{$\tau = 0.04$} \\
        \text{GDP growth} & 0.247 & 0.252 & 0.220 & 0.146 & 0.146 & 0.148 \\
        & (0.071) & (0.048) & (0.046) & (0.032) & (0.043) & (0.027) \\
        \text{Inflation} & -0.189 & -0.159 & -0.104 & -0.048 & -0.006 & 0.034 \\
        & (0.075) & (0.070) & (0.054) & (0.030) & (0.037) & (0.024) \\
        \text{FCI} & -1.225 & -0.884 & -0.251 & 0.272 & 0.349 & 0.301 \\
        & (0.239) & (0.189) & (0.208) & (0.040) & (0.184) & (0.180) \\
        \hline\hline
    \end{tabular}
    \caption{Estimates and two-way cluster-robust standard errors for extreme quantiles. The coefficients represent the effects of the FCI, current GDP growth, and inflation on GDP growth in near- and medium-term horizons.}\label{tab:gdp_growth}
\end{table}


Observe that the FCI has a significantly negative impact on GDP growth at near-term horizons ($h = 2$ and $h = 4$), while its effects turn positive at medium-term horizons ($h = 8$ and $h = 10$). This pattern suggests that looser financial conditions are associated with higher short-run growth but may be linked to increased downside risk over the medium term.
We emphasize that these significant results are obtained even after accounting for the two-way clustering structure in the panel data as well as the extreme quantiles.

The coefficients on current GDP growth (\texttt{dlgdp}) are positive and statistically significant across all forecast horizons, indicating that recent economic momentum is a strong predictor of future growth. Inflation (\texttt{infl}) generally exerts a negative effect on GDP growth, although the magnitude of this impact tends to diminish at longer horizons.

These empirical results are broadly consistent with \citet{adrian2022term}, and our robust inference results reconfirm the earlier findings. The value added by our analysis is thus the guaranteed robustness of these empirical conclusions with statistical inference accounting for the two-way clustering and extreme quantiles.

\section{Summary}

This study investigates extreme quantiles in the context of data sampled under two-way clustering. Our objective is to characterize the asymptotic properties of extreme quantiles in the presence of such dependence structures.

We begin by analyzing the behavior of unconditional quantiles in the tails when the data exhibit two-way clustering, and subsequently extend our analysis to the case of quantile regression. As noted by \citet{Menzel2021}, the presence of two-way clustering can lead to asymptotic distributions of sample means and related statistics that deviate from Gaussianity. In this study, we demonstrate that extreme quantiles retain asymptotic Gaussianity even under two-way clustering for extremal quantiles of intermediate orders. This result enables valid and robust inference for tail behavior despite the complex dependence structure.
Simulation studies corroborate our theoretical findings.

Accounting for both two-way clustering and tail estimation is essential in applied research. By incorporating these considerations, our framework provides a more robust and reliable basis for inference on the extremal behavior of distributions when data are subject to multi-dimensional clustering.

We apply our theoretical framework to the empirical analysis of growth-at-risk.
While our contribution does not concern identification or estimation per se, but rather the construction of valid standard errors, it is nevertheless important for empirical research to assess whether existing findings are robust to appropriate inference procedures.
We show that, even after accounting for two-way clustering and extreme quantiles, the main empirical conclusions in the existing literature remain qualitatively unchanged.

\bibliographystyle{ecta}
\bibliography{bibliography}

\appendix
\section*{Appendix}
\section{Proof of Theorem \ref{theorem:main}}\label{sec:theorem:main}

\begin{proof}
From Lemmas \ref{lemma:w} and \ref{lemma:g} in Appendix \ref{sec:lemma:w} and \ref{sec:lemma:g}, respectively, we have
\begin{equation*}
Q_{NT}(z, \tau) \stackrel{d}{\rightarrow}
\tilde W z+\frac{1}{2} \cdot\left(\frac{m^{-\xi}-1}{-\xi}\right) \cdot z^2,
\end{equation*}
where $\tilde W$ follows the standard normal distribution.

Hence, the limit of $Q_{NT}(z, \tau)$ is minimized at 
\[
\left(\frac{\xi}{m^{-\xi}-1}\right)  W\sim N\left(0, \frac{\xi^2}{\left(m^{-\xi}-1\right)^2}  \right).
\]
By the convexity lemma \citep{geyer1996asymptotics,knight1999epi}
therefore, it follows that
\begin{equation*}
\widehat{Z}_{NT} \stackrel{d}{\rightarrow} N\left(0, \frac{\xi^2}{\left(m^{-\xi}-1\right)^2} \right),
\end{equation*}
as desired.
\end{proof}

\subsection{Auxiliary Lemma: Asymptotic Behavior of $\sigma_{NT}^{-1}W_{NT}(\tau)$}\label{sec:lemma:w}

\begin{lemma}\label{lemma:w}
If Assumptions \ref{a:b} and \ref{a:lyapunov} are satisfied, then 
\[
\frac{W_{NT}(\tau)}{\sigma_{NT}}\xrightarrow{d} N(0, 1).
\]
\end{lemma}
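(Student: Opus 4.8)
The plan is to establish the CLT for $W_{NT}(\tau)/\sigma_{NT}$ via the Hoeffding-type decomposition already introduced after the statement of Theorem \ref{theorem:main}. First I would write
\[
W_{NT}(\tau) = A_{1,\tau} + A_{2,\tau} + A_{3,\tau} + A_{4,\tau},
\]
where $A_{1,\tau}$ is the (scaled) sum of the projections $E[D_{it}\mid\alpha_i]$ onto the row effects, $A_{2,\tau}$ the analogous projection onto the column effects, $A_{4,\tau}$ the idiosyncratic part $D_{it}-E[D_{it}\mid\alpha_i,\gamma_t]$, and $A_{3,\tau}$ the interaction remainder, exactly as in the display of Section \ref{sec:variance_estimation} with $D_{it}=\tau-\mathbbm{1}[Y_{it}<\beta(\tau)]$. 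By the mutual independence of $\{\alpha_i\}$, $\{\gamma_t\}$, $\{\varepsilon_{it}\}$ these four terms are mutually orthogonal, so $\sigma_{NT}^2 = \Var(A_{1,\tau})+\Var(A_{2,\tau})+\Var(A_{3,\tau})+\Var(A_{4,\tau})$; this is the decomposition on which everything rests.

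Next I would compute the orders of the four variances. A direct second-moment calculation gives $\Var(A_{1,\tau}) = \frac{T}{\tau}\big(E[P(Y_{it}<\beta(\tau)\mid\alpha_i)^2]-\tau^2\big)$ and similarly for $A_{2,\tau}$ with $T,\alpha_i$ replaced by $N,\gamma_t$; meanwhile $\Var(A_{4,\tau}) = \frac{1}{\tau}E\big[P(Y_{it}<\beta(\tau)\mid\alpha_i,\gamma_t)(1-P(Y_{it}<\beta(\tau)\mid\alpha_i,\gamma_t))\big]\to 1$ since $P(Y_{it}<\beta(\tau)\mid\alpha_i,\gamma_t)\to 0$ (using $NT\tau\to\infty$ and Assumption \ref{a:lyapunov}(iii) to control the conditional probabilities), and $\Var(A_{3,\tau}) = \frac{1}{\tau}E[P(Y_{it}<\beta(\tau)\mid\alpha_i,\gamma_t)^2] - (\text{lower-order cross terms}) = o(1)$ by Assumption \ref{a:b}(iii). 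Thus $A_{3,\tau}$ is asymptotically negligible relative to $A_{4,\tau}$ regardless of the non-Gaussianity it may carry, and $\sigma_{NT}^2$ is of order $\max\{\Var(A_{1,\tau}),\Var(A_{2,\tau}),1\}$.

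Then I would prove asymptotic normality of the dominant part $(A_{1,\tau}+A_{2,\tau}+A_{4,\tau})/\sigma_{NT}$. Conditionally on the collection of row and column effects, $A_{4,\tau}$ is a sum of independent, mean-zero, bounded terms; $A_{1,\tau}$ is a sum of $N$ i.i.d.\ mean-zero terms and $A_{2,\tau}$ a sum of $T$ i.i.d.\ mean-zero terms, and these three blocks are mutually independent. So I would verify a Lyapunov condition for each block separately — for $A_{1,\tau}$ this is precisely Assumption \ref{a:lyapunov}(i) (the ratio of third to $3/2$ power of second moments is $o(\sqrt N)$), for $A_{2,\tau}$ it is Assumption \ref{a:lyapunov}(ii), and for $A_{4,\tau}$ one uses boundedness of $D_{it}-E[D_{it}\mid\alpha_i,\gamma_t]$ together with $\Var(A_{4,\tau})\to 1$ and $NT\to\infty$ — and conclude via the Lindeberg–Feller/Cramér–Wold device applied to the sum of the three asymptotically independent Gaussian-limit blocks (normalizing each block by $\sigma_{NT}$, noting the three normalized variances sum to something converging to $1$ after adding the $o(1)$ contribution of $A_{3,\tau}$). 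Finally $|A_{3,\tau}|/\sigma_{NT}\xrightarrow{p}0$ by Chebyshev since $\Var(A_{3,\tau}) = o(\tau) = o(1) \le O(\sigma_{NT}^2)$, so Slutsky gives $W_{NT}(\tau)/\sigma_{NT}\xrightarrow{d}N(0,1)$.

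The main obstacle is the bookkeeping around the degenerate cases: when $\Var(A_{1,\tau})$ and $\Var(A_{2,\tau})$ both vanish (so $\sigma_{NT}\to 1$ and the limit is driven purely by $A_{4,\tau}$), and when only one of them is non-degenerate, the Cramér–Wold combination must be handled so that the normalized variances genuinely converge and no block is normalized by something going to zero; this requires arguing that whichever blocks survive dominate, invoking Assumption \ref{a:b}(i)--(ii) to rule out $\Var(A_{1,\tau})$ or $\Var(A_{2,\tau})$ blowing up relative to the scale, and checking that the surviving-block Lyapunov ratios are still $o(\sqrt{N})$, $o(\sqrt{T})$ after renormalization. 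Once the decomposition and these variance orders are pinned down, the CLT itself is a routine triangular-array argument.
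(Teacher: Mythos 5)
Your proposal is correct and follows essentially the same route as the paper: the identical Hoeffding-type decomposition into $A_{1,\tau},\dots,A_{4,\tau}$, the same variance computations showing $\Var(A_{3,\tau})=o(1)$ and $\Var(A_{4,\tau})\to 1$ under Assumption \ref{a:b}, and block-wise Lindeberg--Feller CLTs verified through the Lyapunov ratios in Assumption \ref{a:lyapunov} (with the $A_{4,\tau}$ CLT taken conditionally on $(\alpha,\gamma)$ and deconditioned via convergence of the conditional variance). The only difference is that the combination step you flag as the main obstacle --- merging the independent $A_{1,\tau}$, $A_{2,\tau}$ blocks with the conditionally Gaussian $A_{4,\tau}$ block while allowing any subset of the normalized variances to degenerate --- is dispatched in the paper by citing Theorem 2 of Chen and Rao (2007) rather than by a hand-rolled Cram\'er--Wold argument; note also that the three blocks are orthogonal but not mutually independent ($A_{4,\tau}$ still depends on $\alpha$ and $\gamma$), which is precisely why that theorem, rather than a plain sum-of-independent-Gaussians argument, is invoked.
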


\begin{proof}
Decompose $W_{NT}(\tau)$ as
\begin{equation*}
W_{NT}(\tau) = A_{1,\tau}+A_{2,\tau}+A_{3,\tau}+A_{4,\tau},
\end{equation*}
where
\begin{align*}
A_{1,\tau} =&\frac{-\sqrt{T}}{\sqrt{\tau N}}\sum_{i=1}^N E[\left(\tau-\mathbbm{1}\left[Y_{it}< \beta(\tau)\right]\right) |\alpha_i],\\
A_{2,\tau} =&\frac{-\sqrt{N}}{\sqrt{\tau T}}\sum_{t=1}^T E[\left(\tau-\mathbbm{1}\left[Y_{it}< \beta(\tau)\right]\right) |\gamma_{t}],\\
A_{3,\tau} =&\frac{-1}{\sqrt{\tau N T}}\sum_{i=1}^N \sum_{t=1}^T (E[\left(\tau-\mathbbm{1}\left[Y_{it}< \beta(\tau)\right]\right) |\alpha_i, \gamma_t]-E[\left(\tau-\mathbbm{1}\left[Y_{it}< \beta(\tau)\right]\right) |\alpha_i]\\
&\:\:-E[\left(\tau-\mathbbm{1}\left[Y_{it}< \beta(\tau)\right]\right) |\gamma_t]),\\
A_{4,\tau}=&\frac{-1}{\sqrt{\tau NT}}\sum_{i=1}^N \sum_{t=1}^T(\left(\tau-\mathbbm{1}\left[Y_{it}< \beta(\tau)\right]\right)-E[\left(\tau-\mathbbm{1}\left[Y_{it}< \beta(\tau)\right]\right) |\alpha_i, \gamma_t]).
\end{align*}

First, we show that under Assumption \ref{a:density}, the variance of $A_{3,\tau}$ becomes negligible compared to the variance of $A_{4,\tau}$.  For shorthand notation define $D_{it}(\tau)$ and $\omega_{it}$ as
$$
D_{it}(\tau) = (\tau-\mathbbm{1}\left[Y_{it}<\beta(\tau)\right]),
$$ 
\[
\omega_{it}=E[D_{it}(\tau)|\alpha_i,\gamma_t]-E[D_{it}(\tau)|\alpha_i]-E[D_{it}(\tau)|\gamma_t].
\]
To Calculate the variance of $A_{3,\tau}$, first observe that 
\[
E[\omega_{it}]=E[\omega_{it}|\alpha_i]=E[\omega_{it}|\gamma_t]=-E[D_{it}(\tau)]=0.
\]
If $i\neq i'$ and $t\neq t'$, then we have
$$
E[\omega_{it}\cdot \omega_{i't'}]=(E[\omega_{it}])^2=0,
$$
since $\alpha_i,\alpha_{i'},\gamma_t,\gamma_{t'}$ are independent.
Furthermore, we can show that 
\begin{align*}
E[\omega_{it}\cdot \omega_{it'}]=E [E[\omega_{it}\cdot  \omega_{it'}|\alpha_i]]=
E [E[\omega_{it}|\alpha_i]E [\omega_{it'}|\alpha_i]]=0,
\\
E[\omega_{it}\cdot \omega_{i't}]=E [E[\omega_{it}\cdot  \omega_{i't}|\gamma_t]]=
E [E[\omega_{it}|\gamma_t]E [\omega_{i't}|\gamma_t]]=0.
\end{align*}
Hence, the variance of $A_{3,\tau}$ satisfies 
\begin{align*}
\Var(A_{3,\tau})&=\frac{1}{\tau N T}\sum_{i=1}^N\sum_{t=1}^T E[\omega_{it}^2]\\
&=\frac{1}{\tau N T} \sum_{i=1}^N (E[\left(\tau-\mathbbm{1}\left[Y_{it}< \beta(\tau)\right]\right) |\alpha_i, \gamma_t]-E[\left(\tau-\mathbbm{1}\left[Y_{it}< \beta(\tau)\right]\right) |\alpha_i]\\
&\:\:\:-E[\left(\tau-\mathbbm{1}\left[Y_{it}< \beta(\tau)\right]\right) |\gamma_t])^2\\
&=\frac{1}{\tau}\cdot O(\tau^2\vee E [P^2\left(Y_{it}<\beta(\tau)|\alpha_i\right)]\vee E [P^2\left(Y_{it}<\beta(\tau)|\gamma_t\right)]\vee E [P^2\left(Y_{it}<\beta(\tau)|\alpha_i, \gamma_t\right)]).
\end{align*}
From Assumption \ref{a:b}, it follows that
\begin{align*}
\Var(A_{3,\tau})=& \frac{1}{\tau}\cdot O(\tau^2\vee E [P^2\left(Y_{it}<\beta(\tau)|\alpha_i\right)]\vee E [P^2\left(Y_{it}<\beta(\tau)|\gamma_t\right)]\vee E [P^2\left(Y_{it}<\beta(\tau)|\alpha_i, \gamma_t\right)])\\
= &\frac{1}{\tau}\cdot o(\tau)\\
= & o(1).
\end{align*}
In order to calculate the variance of $A_{4,\tau}$, define $\psi_{it}$ as
\begin{align*}\psi_{it}=&
\left(\tau-\mathbbm{1}\left[Y_{it}<\beta(\tau)\right]\right)-E[\left(\tau-\mathbbm{1}\left[Y_{it}<\beta(\tau)\right]\right) X_{it}|\alpha_i,\gamma_t]\\
=& D_{it}(\tau)-E[D_{it}(\tau)|\alpha_i,\gamma_t]).
\end{align*}
Note that $\psi_{it}$ and $\psi_{i't'}$ are independent given $\alpha_i$, $\alpha_{i'}$, $\gamma_t$ and $\gamma_{t'}$.
By the law of total variance, we can decompose the variance of $A_{4,\tau}$ as
\[
\Var(A_{4,\tau})=E[ \Var(A_{4,\tau}| \alpha ,\gamma)]+ \Var[E[A_{4,\tau}| \alpha,\gamma]],
\]
where $\alpha=(\alpha_1,\alpha_2,....,\alpha_N)$ and 
$\gamma=(\gamma_1,\gamma_2,....,\gamma_T)$.
For the first term on the right-hand side, 
\[
E[ \Var(A_{4,\tau}| \alpha ,\gamma)]=E[\sum_{i=1}^N\sum_{t=1}^T E[\psi_{it}^2 |  \alpha_i ,\gamma_t]]=E[\frac{1}{NT\tau}\sum_{i=1}^N\sum_{t=1}^T \psi_{it}^2].
\]
For the second term, 
\[
E[A_{4,\tau}| \alpha,\gamma]=0,
\]
and hence,
\[
\Var[E[A_{4,\tau}| \alpha,\gamma]]=0.
\]
Therefore, we obtain
\begin{align*}
\Var(A_{4,\tau})=&
E\left[\frac{1}{NT\tau}\sum_{i=1}^N\sum_{t=1}^T \psi_{it}^2\right]\\
=&E[(D_{it}(\tau)-E[D_{it}(\tau)|\alpha_i,\gamma_t])^2]\\
=&\frac{1}{\tau}(\tau-2E[\mathbbm{1}\left[Y_{it}< \beta(\tau)\right]P\left(Y_{it}<\beta(\tau)|\alpha_i,\gamma_t\right)]
+E [P^2\left(Y_{it}<\beta(\tau)|\alpha_i,\gamma_t\right)]).
\end{align*}
It is worth noting that
\begin{align*}
E[\mathbbm{1}\left[Y_{it}< \beta(\tau)\right]P\left(Y_{it}<\beta(\tau)|\alpha_i,\gamma_t\right)=&E[E[\mathbbm{1}\left[Y_{it}< \beta(\tau)\right]P\left(Y_{it}<\beta(\tau)|\alpha_i,\gamma_t\right)|\alpha_i, \gamma_t]]\\
=& E [P^2\left(Y_{it}<\beta(\tau)|\alpha_i,\gamma_t\right)].
\end{align*}
From Assumption \ref{a:b} , 
\[
\lim_{\tau\to 0,\:\:\: N,T\to \infty} \Var(A_{4,\tau})=1.
\]
Hence,
\[
\lim_{\tau\to 0,\:\:\:N,T\to \infty} (\Var(A_{3,\tau})+\Var(A_{4,\tau}))\:\:= \lim_{\tau\to 0,\:\: N,T\to \infty} (\Var(A_{4,\tau}))=1.
\]
This leads to
\begin{equation}\label{124}
\lim_{\tau\to 0,\:\: N,T\to \infty}  \Var(W_{NT}(\tau)) \:\:= \lim_{\tau\to 0,\:\: N,T\to \infty}(\Var(A_{1,\tau})+\Var(A_{2,\tau})+\Var(A_{4,\tau})).
\end{equation}

Now, we show that $A_{1,\tau}/\Var(A_{1,\tau})$, $A_{2,\tau}/\Var(A_{2,\tau})$, and $A_{4,\tau}/\Var(A_{4,\tau})$ are all asymptotically normal. 
We start by showing asymptotic normality of $A_{1,\tau}/\Var(A_{1,\tau})$. Define $Z_{N,i}$
\[
Z_{N,i}= \frac{A_{1,\tau}}{\sqrt{T}}=\frac{1}{\sqrt{N\tau}}E[\left(\tau-\mathbbm{1}\left[Y_{it}< \beta(\tau)\right]\right) |\alpha_i].
\]
Note that $Z_{N,1},...,Z_{N,N}$ are independent as $\alpha_1,...,\alpha_N$ are independent. Hence, 
\[
\Var(\sum_{i=1}^N Z_{N,i})=\frac{\Var(A_{1,\tau})}{T}.
\]
From Assumption \ref{a:lyapunov} (i), the following Lyapunov condition holds.
\[
\frac{E[\sum_{i=1}^N Z_{N,i}^3]}{E[\sum_{i=1}^N Z_{N,i}^2]^{\frac{3}{2}}}=o(1).
\]
Therefore, Lindeberg-Feller CLT yields
\[
\frac{\sum_{i=1}^N Z_{N,i}}{\sqrt{\Var(A_{1,\tau})/T}}=\frac{A_{1,\tau}}{\sqrt{\Var(A_{1,\tau})}}\xrightarrow{d} N(0,1).
\]
Similarly for $A_{2,\tau}$, Define $W_{T,t}$ as
\[
W_{T,t}=\frac{1}{\sqrt{T\tau}} E[\left(\tau-\mathbbm{1}\left[Y_{it}< \beta(\tau)\right]\right) |\gamma_t].
\]
Note that $W_{T,1},...,W_{T,T}$ are independent as $\gamma_1,...,\gamma_T$ are. 
By Assumption \ref{a:lyapunov} (ii), we have
\[
\frac{E[\sum_{t=1}^T W_{T,t}^3]}{E[\sum_{t=1}^T W_{T,t}^2]^{\frac{3}{2}}}=o(1).
\]
Therefore, Lindeberg-Feller CLT yields
\[
\frac{\sum_{t=1}^T W_{T,t}}{\sqrt{\Var(A_{2,\tau})/N}}=\frac{A_{2,\tau}}{\sqrt{\Var(A_{2,\tau})}}\xrightarrow{d} N(0,1).
\]
For $A_{4,\tau}/\Var(A_{4,\tau})$, define $U_{N+T,it}$ as 
\[
U_{N+T,it}=\frac{1}{\sqrt{NT\tau}}(\left(\tau-\mathbbm{1}\left[Y_{it}< \beta(\tau)\right]\right)-E[\left(\tau-\mathbbm{1}\left[Y_{it}< \beta(\tau)\right]\right) |\alpha_i, \gamma_t]).
\]
Note that $U_{N+T,11},...,U_{N+T,NT}$ are independent conditional on $\alpha$ and $\gamma$.
By Assumption \ref{a:lyapunov} (iii), we have
\[
\frac{E[\sum_{i=1}^N \sum_{i=t}^T U_{N+T,it}^3|\alpha,\gamma]}{E[\sum_{i=1}^N \sum_{i=t}^T U_{N+T,it}^2|\alpha,\gamma]^\frac{3}{2}}=o(1).
\]
Therefore, given $\alpha$ and $\gamma$, Lindeberg-Feller CLT yields
\[
\frac{\sum_{i=1}^T U_{N+T}}{\Var(A_{4,\tau})}\xrightarrow{d} N(0,1).
\]
Since
\[
\Var(A_{4,\tau})=E[\Var(A_{4,\tau}|\alpha,\gamma)],
\]
we have
\[
\Var(A_{4,\tau}|\alpha,\gamma)=\frac{1}{NT\tau}\sum_{i=1}^N\sum_{t=1}^T E[\psi_{it}^2 |  \alpha_i ,\gamma_t]
\xrightarrow{p} E[\Var(A_{4,\tau}|\alpha,\gamma)]=\Var(A_{4,\tau}).
\]
Since $A_{1,\tau}$ and $A_{2,\tau}$ are independent, $\displaystyle\frac{A_{1,\tau}+A_{2,\tau}}{\sqrt(\Var(A_{1,\tau})+\Var(A_{2,\tau})}$ converges to the standard normal distribution.
By Chen and Rao (2007) Theorem 2, we obtain 
\begin{equation}\label{AN}
W_{NT}(\tau)/\sqrt{\Var(A_{1,\tau})+\Var(A_{2,\tau})+\Var(A_{4,\tau})}\xrightarrow{d} N(0,1).
\end{equation}
Combining \eqref{124} and \eqref{AN} we have
\[
\frac{W_{NT}(\tau)}{\sigma_{NT}}\xrightarrow{d} N(0,1).
\]
This completes the proof.
\end{proof}

\subsection{Auxiliary Lemma: Asymptotic Behavior of $G_{NT}(z,\tau)$}\label{sec:lemma:g}

\begin{lemma}\label{lemma:g}
If Assumptions \ref{a:regular}, \ref{a:b}, and \ref{a:density} are satisfied, then
\[
G_{NT}(z, \tau)\xrightarrow{p} E[G_{NT}(z, \tau)]=  \frac{1}{2} \cdot \frac{m^{-\xi}-1}{-\xi} \cdot  z^2.
\]
\end{lemma}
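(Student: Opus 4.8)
\textbf{Proof proposal for Lemma \ref{lemma:g}.}
The plan is to show that $G_{NT}(z,\tau)$ converges in probability to its mean, and then to identify that mean as the stated limit. I would first compute the expectation. Writing $s = \sigma_{NT}z/a_{NT}$ for the (random-free) upper limit of integration, note that by Fubini
\[
E[G_{NT}(z,\tau)] = \frac{a_{NT}}{\sigma_{NT}^2\sqrt{\tau NT}}\, NT \int_0^{\sigma_{NT}z/a_{NT}} \bigl(F_Y(\beta(\tau)+s) - F_Y(\beta(\tau))\bigr)\, ds.
\]
Changing variables $s = (\beta(m\tau)-\beta(\tau))\cdot u$ so that the step $\sigma_{NT}z/a_{NT} = (\beta(m\tau)-\beta(\tau))\cdot \sigma_{NT}^2 z /(\tau NT) \cdot (\text{stuff})$ matches up with the natural scale $\beta(m\tau)-\beta(\tau)$, I would use Assumption \ref{a:regular}(i): regular variation of $\tau\mapsto \partial F_Y^{-1}(\tau)/\partial\tau$ with exponent $-\xi-1$ implies, via the standard extreme-value manipulation (as in Chernozhukov, 2005, proof of Theorem 4.1), that $(F_Y(\beta(\tau)+s)-F_Y(\beta(\tau)))/\tau \to (1+ s/\text{scale})^{\dots}$-type expressions; more precisely the relevant integral $\int_0^{z}\bigl(F_Y(\beta(\tau)+ (\beta(m\tau)-\beta(\tau))v)-\tau\bigr)dv \big/ \tau \to \int_0^z\bigl((1 + v(m^{-\xi}-1))^{-1/\xi}\cdot\text{sign stuff} - 1\bigr)\,dv$. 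Carrying the constants through, the $a_{NT}$, $\sigma_{NT}$, $\sqrt{\tau NT}$ and $\beta(m\tau)-\beta(\tau)$ all cancel against the definition $a_{NT}=\sqrt{\tau NT}/(\beta(m\tau)-\beta(\tau))$, and the integral evaluates to $\tfrac12\cdot\tfrac{m^{-\xi}-1}{-\xi}z^2$ after a routine computation using $\int_0^z (1+v(m^{-\xi}-1))^{\dots}dv$; the key normalization is that at $v=1$ the argument is exactly $m\tau$, pinning down the constant $m^{-\xi}-1$.

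Next I would establish concentration, i.e. $G_{NT}(z,\tau) - E[G_{NT}(z,\tau)] \xrightarrow{p} 0$, by showing $\Var(G_{NT}(z,\tau)) \to 0$. Write $G_{NT}(z,\tau) = \frac{a_{NT}}{\sigma_{NT}^2\sqrt{\tau NT}}\sum_{i,t}\xi_{it}$ where $\xi_{it} = \int_0^{\sigma_{NT}z/a_{NT}}(\mathbbm{1}\{Y_{it}-\beta(\tau)\le s\} - \mathbbm{1}\{Y_{it}-\beta(\tau)\le 0\})\,ds$. Since $|\xi_{it}| \le |\sigma_{NT}z/a_{NT}|\cdot \mathbbm{1}\{|Y_{it}-\beta(\tau)|\le |\sigma_{NT}z/a_{NT}|\}$ (the integrand is bounded by $1$ and nonzero only on that event), we get $E[\xi_{it}^2] = O\bigl((\sigma_{NT}/a_{NT})^2 \cdot (\sigma_{NT}/a_{NT})\cdot f_Y(\beta(\tau))\bigr)$-type bounds. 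For the covariance structure I would follow the same four-way decomposition as in Lemma \ref{lemma:w}: the only non-vanishing cross terms are those sharing an $\alpha_i$ or a $\gamma_t$, and Assumption \ref{a:density} is exactly what controls $E[\xi_{it}\xi_{it'}]$ and $E[\xi_{it}\xi_{i't}]$ — Assumption \ref{a:density}(i),(ii) bound the conditional densities $f_Y(y\mid\alpha_i)$, $f_Y(y\mid\gamma_t)$ by $O(f_Y(y))$, so the within-row and within-column sums contribute terms of the right (vanishing) order once divided by the normalization. Assumption \ref{a:b} enters to guarantee $\sigma_{NT}$ does not vanish too fast (so that $\sigma_{NT}/a_{NT} = (\beta(m\tau)-\beta(\tau))\sigma_{NT}/\sqrt{\tau NT}$ is controlled) and to handle the $(\alpha_i,\gamma_t)$-level covariance term.

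The main obstacle I anticipate is the variance bound, specifically tracking the covariance terms $E[\xi_{it}\xi_{it'}]$ and $E[\xi_{it}\xi_{i't}]$ through the regular-variation asymptotics and verifying they are genuinely dominated after normalization — this is where the interplay of Assumptions \ref{a:b} and \ref{a:density} matters, and where one must be careful that the "shrinking window" $[\,0,\sigma_{NT}z/a_{NT}]$ really does shrink relative to $\beta(m\tau)-\beta(\tau)$ at the right rate (this is a consequence of $\sigma_{NT}^2/(\tau NT)\to 0$, which itself follows from Assumption \ref{a:b}). The expectation computation, by contrast, is essentially the cross-sectional calculation of Chernozhukov (2005) and should go through verbatim once the constants are bookkept. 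I would close the argument by combining: $\Var(G_{NT})\to 0$ plus $E[G_{NT}]\to \tfrac12\cdot\tfrac{m^{-\xi}-1}{-\xi}z^2$ gives convergence in probability to that constant for each fixed $z$, which is exactly the claim.
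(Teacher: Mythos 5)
Your proposal is correct and follows essentially the same route as the paper: the expectation is computed exactly as in Chernozhukov (2005, Theorem 5.1, step 2) using the regular variation in Assumption \ref{a:regular} and the cancellation built into the definition of $a_{NT}$ (with Assumption \ref{a:b} guaranteeing $\sigma_{NT}z/a_{NT}=o(F_Y^{-1}(m\tau)-F_Y^{-1}(\tau))$), and the concentration step uses the same four-way projection decomposition onto $\alpha_i$, $\gamma_t$, $(\alpha_i,\gamma_t)$ and remainder, with Assumption \ref{a:density} controlling the conditional-density terms so that the four variance contributions are $O(1/N)+O(1/T)+O(1/(NT))+O(1/\sqrt{NT\tau})$. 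No substantive gap.
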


\begin{proof}
Rewrite $G_{NT}(z, \tau)$ as
\begin{align*}
G_{NT}(z, \tau)&= \sum_{i=1}^N \sum_{t=1}^T \frac{a_{NT}}{\sigma^2_{NT}}\cdot\left(\int_0^{\sigma_{NT}\cdot z / a_{NT}}\left[\frac{\mathbbm{1}\left(Y_{it}-\beta(\tau) \leq s\right)-\mathbbm{1}\left(Y_{it}-\beta(\tau) \leq 0\right)}{\sqrt{\tau NT}}\right] d s\right) \\
&=\frac{1}{\sigma_{NT}}\sum_{i=1}^N \sum_{t=1}^T\left(\int_0^{z}\left[\frac{\mathbbm{1}\left(Y_{it}-\beta(\tau) \leq \sigma_{NT} \cdot s / a_{NT}\right)-\mathbbm{1}\left(Y_{it}-\beta(\tau) \leq 0\right)}{\sqrt{\tau NT}}\right] d s\right),
\end{align*}
For computing $E[G_{NT}(z, \tau)]$,  we follow the proof outlined in step 2 of Theorem 5.1 in Chernozhukov (2005).
\begin{equation}\label{eq:c}
\begin{aligned}
E G_{NT}(z, \tau) &=\frac{NT}{\sigma_{NT}}  \cdot \left(\int_0^{z} \frac{F_Y\left[F_Y^{-1}(\tau)+\sigma_{NT}\cdot s / a_{NT}\right]-F_Y\left[F_Y^{-1}(\tau)\right]}{\sqrt{\tau NT}} d s\right) \\
& \stackrel{(1)}{=} NT \cdot \left(\int_0^{z} \frac{f_Y\left\{F_Y^{-1}(\tau)+o\left(F_Y^{-1}(m \tau)-F_Y^{-1}(\tau)\right)\right\}}{a_{NT} \cdot \sqrt{\tau NT}} \cdot s \cdot d s\right) \\
& \stackrel{(2)}{\sim}  NT\cdot \left(\int_0^{z} \frac{f_Y\left\{F_Y^{-1}(\tau)\right\}}{a_{NT} \cdot \sqrt{\tau NT}} \cdot s \cdot d s\right) \\
&=NT  \cdot \left(\frac{1}{2} \cdot\left(z\right)^2 \cdot \frac{f_Y\left\{F_Y^{-1}(\tau)\right\}}{a_{NT} \cdot \sqrt{\tau NT}}\right) \\
&=E\left(\frac{1}{2} \cdot\left(z\right)^2 \cdot \frac{F_Y^{-1}(m \tau)-F_Y^{-1}(\tau)}{\tau\left(f_Y\left\{F_Y^{-1}(\tau)\right\}\right)^{-1}}\right) \\
& \stackrel{(3)}{\sim} E\left(\frac{1}{2} \cdot\left(z\right)^2 \cdot \frac{m^{-\xi}-1}{-\xi}\right) \\
& = \frac{1}{2} \cdot \frac{m^{-\xi}-1}{-\xi} \cdot z^2 .
\end{aligned}
\end{equation}

Equality (1) is by the definition of $a_{NT}$ and a Taylor expansion. Since Assumption \ref{a:b} ensures that $\displaystyle \sigma_{NT}^{-1} {\sqrt{\tau N T}} \rightarrow\infty$ uniformly over $s$ in any compact subset of $\mathbb{R}$, we have
\[
\sigma_{NT}\cdot s / a_{NT}=\sigma_{NT}\cdot s \cdot\left(F_Y^{-1}(m \tau)-F_Y^{-1}(\tau)\right) / \sqrt{\tau NT}=o\left(F_Y^{-1}(m \tau)-F_Y^{-1}(\tau)\right).
\]
To show equivalence (2), it suffices to prove that, for any sequence $v_\tau=$ $o\left(F_Y^{-1}(m \tau)-F_Y^{-1}(\tau)\right)$ with $m>1$ as $\tau \to 0$,
$f_Y\left(F_Y^{-1}(\tau)+v_\tau\right) \sim f_Y\left(F_Y^{-1}(\tau)\right)$ holds.
By Assumption \ref{a:regular}, we have
\[f_Y\left(F_Y^{-1}(l \tau)\right) \sim l^{\xi+1} f_Y\left(F_Y^{-1}(\tau)\right).
\]
Locally uniform in $l$,
\[
f_Y\left(F_Y^{-1}(\tau)+\left[F_Y^{-1}(l \tau)-F_Y^{-1}(\tau)\right]\right) \sim l^{\xi+1} f_Y\left(F_Y^{-1}(\tau)\right).
\]
Hence, for any $l_\tau \rightarrow 1$,
\[
f_Y\left(F_Y^{-1}(\tau)+\left[F_Y^{-1}\left(l_\tau \tau\right)-F_Y^{-1}(\tau)\right]\right) \sim f_Y\left(F_Y^{-1}(\tau)\right).
\]
It follows that, for any sequence $v_\tau=o\left(\left[F^{-1}(m \tau)-F^{-1}(\tau)\right]\right)$ with $m>1$ as $\tau \to 0$,
\[
f_Y\left(F^{-1}(\tau)+v_\tau\right) \sim f_Y\left(F_Y^{-1}(\tau)\right).
\]
The equivalence (3) can be shown by the regular variation property, 
\[
\begin{aligned}
\frac{F_Y^{-1}(m \tau)-F_Y^{-1}(\tau)}{\tau\left(f_Y\left[F_Y^{-1}(\tau)\right]\right)^{-1}} & \equiv \int_1^m \frac{f_Y\left[F_Y^{-1}(\tau)\right]}{f_Y\left[F_Y^{-1}(s \tau)\right]} d s \\
& \sim \int_1^m s^{-\xi-1} d s \\
& =\frac{m^{-\xi}-1}{-\xi} \quad(\ln m \text { if } \xi=0) .
\end{aligned}
\]
Now, it remains to show
\[
G_{NT}(z, \tau)\longrightarrow E[G_{NT}(z, \tau)].
\]
Let us introduce the short-hand notation:
\begin{equation*}
\lambda_{it}=\frac{1}{\sigma_{NT}}\int_0^{z}\left[\mathbbm{1}\left(Y_{it}-\beta(\tau) \leq \sigma_{NT}\cdot  s / a_{NT}\right)-\mathbbm{1}\left(Y_{it}-\beta(\tau) \leq 0\right)\right] d s .
\end{equation*}
Then, we can write
\begin{align*}
 G_{NT}(z, \tau)=\frac{1}{\sqrt{NT\tau}}\sum_{i=1}^N\sum_{t=1}^T \lambda_{it}&=B_1+B_2+B_3+B_4
\end{align*}
where
\begin{align*}
B_1=&\frac{-\sqrt{T}}{\sqrt{\tau N}}\sum_{i=1}^N E[\lambda_{it}|\alpha_i],\\
B_2=&\frac{-\sqrt{N}}{\sqrt{\tau T}}\sum_{t=1}^T E[\lambda_{it}|\gamma_{t}],\\
B_3=&\frac{-1}{\sqrt{\tau N T}}\sum_{i=1}^N \sum_{t=1}^T (E[\lambda_{it}|\alpha_i, \gamma_t]-E[\lambda_{it}|\alpha_i]-E[\lambda_{it}|\gamma_t]),\\
B_4=&\frac{-1}{\sqrt{\tau NT}}\sum_{i=1}^N \sum_{t=1}^T(\lambda_{it}-E[\lambda_{it}|\alpha_i, \gamma_t]).
\end{align*}
We first focus on the $B_1$ term.
\begin{align*}
\frac{\sqrt{ NT}\cdot}{\sqrt{\tau}}\cdot E[\lambda_{it}|\alpha_i]
&= \frac{\sqrt{ NT}\cdot}{\sqrt{\tau} \sigma_{NT}} \cdot E\left[\int_0^{z}[\mathbbm{1}\left(Y_{it}-\beta(\tau) \leq \sigma_{NT}\cdot  s / a_{NT}\right)-\mathbbm{1}\left(Y_{it}-\beta(\tau) \leq 0\right)]d s\middle|\alpha_i\right]\\
&=
\frac{\sqrt{ NT}\cdot}{\sqrt{\tau} \sigma_{NT}}  \cdot \left(\int_0^{z} F_Y\left[F_Y^{-1}(\tau)+\sigma_{NT}\cdot s / a_{NT} \middle | \alpha_i\right]-F_Y\left[F_Y^{-1}(\tau)| \alpha_i\right] d s\right) \\
& \stackrel{(1)}{\sim}  \left(\int_0^{z} \frac{f_Y\left\{F_Y^{-1}(\tau) | \alpha_i \right\}}{a_{NT} \cdot } \cdot s \cdot d s\right) \\
&= \left(\frac{1}{2} \cdot\left(z\right)^2 \cdot \frac{f_Y\left\{F_Y^{-1}(\tau) | \alpha_i\right\}}{a_{NT}}  \right) \\
&=\left(\frac{1}{2} \cdot\left(z\right)^2 \cdot \frac{F_Y^{-1}(m \tau)-F_Y^{-1}(\tau)}{\tau\left(f_Y\left\{F_Y^{-1}(\tau)\right | \alpha_i\}\right)^{-1}}\right) \\
& \stackrel{(2)}{\sim} \left(\frac{1}{2} \cdot\left(z\right)^2 \cdot \frac{m^{-\xi}-1}{-\xi}\right) \\
& \equiv \frac{1}{2} \cdot \frac{m^{-\xi}-1}{-\xi} \cdot z^2 .
\end{align*}
Here,
equivalence (1) follows from Assumption \ref{a:regular} (ii) and the same arguments made in Equation \eqref{eq:c}.
Equivalence (2) follows from Assumption \ref{a:regular} (ii) and Assumption \ref{a:density} (i), as
\[
\begin{aligned}
\frac{F_Y^{-1}(m \tau)-F_Y^{-1}(\tau)}{\tau\left(f_Y\left[F_Y^{-1}(\tau)| \alpha_i\right]\right)^{-1}} & \equiv \int_1^m \frac{f_Y\left[F_Y^{-1}(\tau)\right]}{f_Y\left[F_Y^{-1}(s \tau)\right]} \cdot \frac{f_Y\left[F_Y^{-1}(\tau)| \alpha_i\right]}{f_Y\left[F_Y^{-1}(\tau)\right]}d s \\
& \sim \int_1^m s^{-\xi-1} d s \\
& =\frac{m^{-\xi}-1}{-\xi} \quad(\ln m \text { if } \xi=0) .
\end{aligned}
\]
It follows that
\begin{align*}
\Var(E[\lambda_{it}|\alpha_i])=&O(E[E[\lambda_{it}|\alpha_i]^2])=
O(\tau/NT)
\end{align*}
which leads to 
\[
\Var(B_1)=O(\tau/NT)\cdot \frac{T}{\tau N}\cdot N =O(1/{N}).
\]
Similarly, we obtain 
\begin{align*}
\Var(E[\lambda_{it}|\gamma_t])=&O(E[E[\lambda_{it}|\gamma_t]^2])=
O(\tau/NT)\quad \text{and}\\
\Var(E[\lambda_{it}|\alpha_i,\gamma_t])=&O(E[E[\lambda_{it}|\alpha_i,\gamma_t]^2])=O(\tau/NT).
\end{align*}
Hence,
\begin{align*}
\Var(B_2)=&O(\tau/NT)\cdot \frac{N}{\tau T}\cdot T =O(1/{T})\quad
\text{and}\\
\Var(B_3)=&O(\tau/NT)\cdot \frac{1}{\tau N T}\cdot T =O(1/{NT})
\end{align*}
follow.
For $B_4$, recall that 
\begin{equation*}
\lambda_{it}=\frac{1}{\sigma_{NT}}\int_0^{z}\left[\mathbbm{1}\left(Y_{it}-\beta(\tau) \leq \sigma_{NT}\cdot  s / a_{NT}\right)-\mathbbm{1}\left(Y_{it}-\beta(\tau) \leq 0\right)\right] d s .
\end{equation*}
Let
\begin{equation*}
\mu_{it}=\frac{1}{\sigma_{NT}}\left(\mathbbm{1}\left(Y_{it}-\beta(\tau) \leq \sigma_{NT}\cdot z / a_{NT}\right)-\mathbbm{1}\left(Y_{it}-\beta(\tau) \leq 0\right)\right).
\end{equation*}
Then, there exists $K_0>0$ such that 
\begin{equation}\label{bound}
\left|\lambda_{it}\right| \leq K_0\left|\mu_{it}\right|.
\end{equation}
But then,
\[
\Var(\lambda_{it})=O(E[\lambda^2_{it}])=O(E[\mu^2_{it}])=O(E[\abs{\mu_{it}}])
=O(\sqrt{\tau/NT})
\]
where the last equation follows from equation (\ref{eq:c}).
Therefore,
\[
\Var(B_4)=\frac{1}{NT\tau} \cdot NT\cdot O(\sqrt{\tau / NT})=O({1/\sqrt{NT\tau}}).
\]
From the all the above convergence results, we have
\[\Var(G_{NT}(z, \tau))=\left(O\left(\frac{1}{N}\right)+O\left(\frac{1}{T}\right)+O\left(\frac{1}{NT}\right)+O\left(\frac{1}{\sqrt{NT\tau}}\right)\right),
\]
and this completes the proof.
\end{proof}

\section{Proof of Theorem \ref{theorem:regression}}\label{sec:theorem:regression}
\begin{proof}
From Lemmas \ref{W2} and \ref{G2},
\begin{equation}
Q_{NT}(z, \tau) \rightarrow 
W^{\prime} z+\frac{1}{2} \cdot\left(\frac{m^{-\xi}-1}{-\xi}\right) \cdot z^{\prime} \mathcal{Q}_H z,
\end{equation}
where 
\begin{align*}
&W\sim N(0,\Sigma),
\\
&G_{NT}
\stackrel{p}{\rightarrow} \frac{1}{2}\left(\frac{m^{-\xi}-1}{-\xi}\right) z^{\prime} \mathcal{Q}_{H} z.
\end{align*}
Hence, the limit of $Q_{NT}(z, \tau)$ is minimized at 
\[
\left(\frac{\xi}{m^{-\xi}-1}\right) \mathcal{Q}_H^{-1} W\sim N\left(0, \frac{\xi^2}{\left(m^{-\xi}-1\right)^2} \mathcal{Q}_H^{-1} \Sigma \mathcal{Q}_H^{-1}\right).
\]
By the convexity lemma \citep{geyer1996asymptotics,knight1999epi}, we have 
\begin{equation*}
\tilde{Z}_{NT} \stackrel{d}{\rightarrow} N\left(0, \frac{\xi^2}{\left(m^{-\xi}-1\right)^2} \mathcal{Q}_H^{-1} \Gamma \mathcal{Q}_H^{-1}\right).
\end{equation*}
Since  $\widehat Z_{NT}$ satisfies
\[
\widehat Z_{NT}=c_{NT} \cdot \Sigma^{-\frac{1}{2}}_{NT}\cdot\mathcal{Q}_H\cdot   \tilde Z_{NT},
\]
we have
\begin{equation*}
\widehat{Z}_{NT} \stackrel{d}{\rightarrow} N\left(0, \frac{\xi^2}{\left(m^{-\xi}-1\right)^2} I \right)
\end{equation*}
and this completes a proof.

\end{proof}
\subsection{Auxiliary Lemma: Asymptotic Behavior of $W_{NT}(z,\tau)$ (Quantile Regresion case)}\label{sec:lemma:regression:w}

\begin{lemma}\label{W2}

If Assumption \ref{b2} and \ref{lyapunov2} are true then 
\[
c^{-1}_{NT}\cdot W_{NT}(\tau)\xrightarrow{d} N(0, \Gamma)
\]
where 
\[
\Gamma=\lim_{N,T\to \infty, \tau\to 0 } c^{-2}_{NT}\cdot \Var(W_{NT}).
\]

\end{lemma}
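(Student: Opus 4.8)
The plan is to follow the same Hájek-type projection argument used in the proof of Lemma \ref{lemma:w}, adapted to the vector-valued statistic. Writing $D_{it}(\tau)=(\tau-\mathbbm{1}[Y_{it}<X_{it}'\beta(\tau)])X_{it}$, decompose
$$
W_{NT}(\tau)=A_{1,\tau}+A_{2,\tau}+A_{3,\tau}+A_{4,\tau},
$$
where $A_{1,\tau}=-(\tau NT)^{-1/2}\sum_{i,t}E[D_{it}(\tau)\mid\alpha_i]$, $A_{2,\tau}=-(\tau NT)^{-1/2}\sum_{i,t}E[D_{it}(\tau)\mid\gamma_t]$, the interaction term $A_{3,\tau}=-(\tau NT)^{-1/2}\sum_{i,t}\omega_{it}$ with $\omega_{it}=E[D_{it}(\tau)\mid\alpha_i,\gamma_t]-E[D_{it}(\tau)\mid\alpha_i]-E[D_{it}(\tau)\mid\gamma_t]$, and $A_{4,\tau}=-(\tau NT)^{-1/2}\sum_{i,t}\psi_{it}$ with $\psi_{it}=D_{it}(\tau)-E[D_{it}(\tau)\mid\alpha_i,\gamma_t]$. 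Since $E[D_{it}(\tau)]=0$ by the first-order condition defining $\beta(\tau)$, the four blocks are mutually uncorrelated, so $\Sigma_{NT}=\Var(A_{1,\tau})+\Var(A_{2,\tau})+\Var(A_{3,\tau})+\Var(A_{4,\tau})$ exactly, just as in the scalar case.

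First step: show $A_{3,\tau}$ is asymptotically negligible. As in Lemma \ref{lemma:w}, the mutual independence of the $\alpha$'s and $\gamma$'s together with iterated expectations along the shared index gives $E[\omega_{it}\omega_{i't'}']=0$ unless $i=i'$ and $t=t'$, so $\Var(A_{3,\tau})=(\tau NT)^{-1}\sum_{i,t}E[\omega_{it}\omega_{it}']$. Assumption \ref{b2}(iii) then yields $\Var(A_{3,\tau})=\tau^{-1}o(\tau)=o(1)$ in the matrix sense, while the same computation as in Lemma \ref{lemma:w} shows $\Var(A_{4,\tau})=O(1)$ and bounded away from $0$ along the idiosyncratic directions; hence $\Sigma_{NT}=\Var(A_{1,\tau})+\Var(A_{2,\tau})+\Var(A_{4,\tau})+o(1)$.

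Second step: asymptotic normality of the three surviving blocks via the Cramér–Wold device. Fix $\lambda\in\mathbb{R}^d$. Then $\lambda'A_{1,\tau}$ is a normalized sum over $i$ of the independent mean-zero scalars $\lambda'E[D_{it}(\tau)\mid\alpha_i]$; Assumption \ref{lyapunov2}(i) supplies the Lyapunov ratio, so Lindeberg–Feller gives $\lambda'A_{1,\tau}/\sqrt{\lambda'\Var(A_{1,\tau})\lambda}\xrightarrow{d}N(0,1)$, and likewise for $\lambda'A_{2,\tau}$ using \ref{lyapunov2}(ii). For $A_{4,\tau}$, condition on $(\alpha,\gamma)$: the summands $\psi_{it}$ are then independent, Assumption \ref{lyapunov2}(iii) is the conditional Lyapunov condition, and $\Var(A_{4,\tau}\mid\alpha,\gamma)=(\tau NT)^{-1}\sum_{i,t}E[\psi_{it}\psi_{it}'\mid\alpha_i,\gamma_t]\xrightarrow{p}\Var(A_{4,\tau})$ by a law of large numbers, so that $\lambda'A_{4,\tau}/\sqrt{\lambda'\Var(A_{4,\tau})\lambda}\xrightarrow{d}N(0,1)$. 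Since $A_{1,\tau}$ and $A_{2,\tau}$ are independent, an application of Theorem 2 of Chen and Rao (2007), exactly as in Lemma \ref{lemma:w}, upgrades these marginal limits to the joint statement $\{\Var(A_{1,\tau})+\Var(A_{2,\tau})+\Var(A_{4,\tau})\}^{-1/2}W_{NT}(\tau)\xrightarrow{d}N(0,I_d)$. Dividing by $c_{NT}$ and invoking $\Sigma_{NT}/c_{NT}^2\to\Gamma$ — which exists because Assumption \ref{b2} forces $\Sigma_{NT}$ to grow strictly slower than $\tau NT$ — gives $c_{NT}^{-1}W_{NT}(\tau)\xrightarrow{d}N(0,\Gamma)$.

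Beyond the bookkeeping of passing from scalars to vectors, the main obstacle is the possible degeneracy of $\Gamma$: in the degenerate regime $\Var(A_{1,\tau})$ and $\Var(A_{2,\tau})$ may vanish, so one cannot normalize $W_{NT}$ by $\Sigma_{NT}^{1/2}$ and must instead argue through fixed directions $\lambda$ and let the deterministic scaling $c_{NT}$ absorb the rate. Care is also needed to check that the Cramér–Wold limits are mutually consistent — i.e. that the limiting covariance is precisely $\Gamma=\lim c_{NT}^{-2}\Sigma_{NT}$ — which is where the exact orthogonality of the four projection blocks and the separate convergence of each normalized block covariance are used.
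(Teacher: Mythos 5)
Your proposal follows essentially the same route as the paper's proof: the identical four-term Hájek-type projection decomposition, orthogonality of the blocks, negligibility of the interaction term $A_{3,\tau}$ via Assumption \ref{b2}, Lyapunov/Lindeberg--Feller central limit theorems for $A_{1,\tau}$, $A_{2,\tau}$, and (conditionally) $A_{4,\tau}$ under Assumption \ref{lyapunov2}, and the combination via Chen and Rao's (2007) Theorem 2 before rescaling by $c_{NT}$. The only cosmetic differences are that you make the Cram\'er--Wold reduction and the handling of possible degeneracy explicit, which the paper leaves implicit; the substance is the same.
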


\begin{proof}
We decompose
\begin{equation}
W_{NT}(\tau) \equiv \frac{-1}{\sqrt{\tau NT}} \sum_{i=1}^N\sum_{t=1}^T\left(\tau-\mathbbm{1}\left[Y_{it}<X_{it}^{\prime} \beta(\tau)\right]\right) X_{it}
\end{equation}
as
\begin{equation}
\begin{aligned}
W_{NT}(\tau) =&\frac{-\sqrt{T}}{\sqrt{\tau N}}\sum_{i=1}^N E[\left(\tau-\mathbbm{1}\left[Y_{it}<X_{it}^{\prime} \beta(\tau)\right]\right) X_{it}|\alpha_i]\\
&+\frac{-\sqrt{N}}{\sqrt{\tau T}}\sum_{t=1}^T E[\left(\tau-\mathbbm{1}\left[Y_{it}<X_{it}^{\prime} \beta(\tau)\right]\right) X_{it}|\gamma_{t}]\\
&+\frac{-1}{\sqrt{\tau N T}}\sum_{i=1}^N \sum_{t=1}^T (E[\left(\tau-\mathbbm{1}\left[Y_{it}<X_{it}^{\prime} \beta(\tau)\right]\right) X_{it}|\alpha_i, \gamma_t]-E[\left(\tau-\mathbbm{1}\left[Y_{it}<X_{it}^{\prime} \beta(\tau)\right]\right) X_{it}|\alpha_i]\\
&\:\:-E[\left(\tau-\mathbbm{1}\left[Y_{it}<X_{it}^{\prime} \beta(\tau)\right]\right) X_{it}|\gamma_t])\\
&+\frac{-1}{\sqrt{\tau NT}}\sum_{i=1}^N \sum_{t=1}^T(\left(\tau-\mathbbm{1}\left[Y_{it}<X_{it}^{\prime} \beta(\tau)\right]\right)X_{it}-E[\left(\tau-\mathbbm{1}\left[Y_{it}<X_{it}^{\prime} \beta(\tau)\right]\right) X_{it}|\alpha_i, \gamma_t])\\
&=A_{1,\tau}+A_{2,\tau}+A_{3,\tau}+A_{4,\tau}.
\end{aligned}
\end{equation}
First, we show that under Assumptions \ref{b2}, variance of $A_{3,\tau}$ becomes negligible compared to the variance of $A_{4,\tau}$.  For short hand notation define $D_{i t}(\tau)$ and $\omega_{it}$ as
\[D_{i t}(\tau)=\tau-1\left[Y_{i t}<X_{it}^{\prime}\beta(\tau)\right]\]
\[\omega_{it}=
E[\left(\tau-\mathbbm{1}\left[Y_{it}<X_{it}^{\prime} \beta(\tau)\right]\right) X_{it}|\alpha_i, \gamma_t]-E[\left(\tau-\mathbbm{1}\left[Y_{it}<X_{it}^{\prime} \beta(\tau)\right]\right) X_{it}|\alpha_i]\\
-E[\left(\tau-\mathbbm{1}\left[Y_{it}<X_{it}^{\prime} \beta(\tau)\right]\right) X_{it}|\gamma_t].
\]
Note that
\begin{align*}
&\omega_{it}=E[D_{it}|\alpha_i,\gamma_t]-E[D_{it}|\alpha_i]-E[D_{it}|\gamma_t]
\text{ and }\\
&E[\omega_{it}]=E[\omega_{it}|\alpha_i]=E[\omega_{it}|\gamma_t]=-E[D_{it}]=0.
\end{align*}
If $i\neq i'$ and  $t\neq t'$, then
$E[\omega_{it}\cdot \omega_{i't'}]=E[\omega_{it}]E[\omega_{i't'}]=0$,\\
 since $\alpha_i,\alpha_{i'},\gamma_t,\gamma_{t'}$ are independent.
For the correlation between $\omega_{it}$ and $\omega_{it'}$, 
\[
E[\omega_{it}\cdot \omega_{it'}]=E [E[\omega_{it} \omega_{it'}|\alpha_i]]=
E [E[\omega_{it}|\alpha_i]E [\omega_{it'}|\alpha_i]]=0
\]
\[
E[\omega_{it}\cdot \omega_{i't}]=E [E[\omega_{it} \omega_{i't}|\gamma_t]]=
E [E[\omega_{it}|\alpha_i]E [\omega_{i't}|\gamma_t]]=0.
\]
The variance of $A_{3,\tau}$ satisfies 
\begin{equation}\label{varA3x}
\begin{aligned}
\Var(A_{3,\tau})&=\frac{1}{\tau N T}\sum_{i=1}^N\sum_{t=1}^T E[\omega_{it} \omega'_{it}]\\
&=
\frac{1}{\tau}\cdot O(E[E[D_{it}(\tau)| \alpha_{i}]E[D^{\prime}_{it}| \alpha_{i}]]) + O( E[E[D_{it}(\tau)| \gamma_t]E[D^{\prime}_{it}| \gamma_t]])+O(E[E[D_{it}(\tau)| \alpha_{i}, \gamma_t]]E[D^{\prime}_{it}| \alpha_{i}, \gamma_t]]) \\
&= o(1).
\end{aligned}
\end{equation}
The last equation follows from Assumption \ref{b2}.
For the variance of $A_{4,\tau}$, define $\psi_{it}$ as
\[ \psi_{it}=
\left(\tau-\mathbbm{1}\left[Y_{it}<X_{it}^{\prime} \beta(\tau)\right]\right)X_{it}-E[\left(\tau-\mathbbm{1}\left[Y_{it}<X_{it}^{\prime} \beta(\tau)\right]\right) X_{it}|\alpha_i, \gamma_t]
\]
Note that $\psi_{it}$ and $\psi_{i't'}$ are independent given $\alpha_i$, $\alpha_{i'}$, $\gamma_t$ and $\gamma_{t'}$.
By the law of total variance,
\[
\Var(A_{4,\tau})=E[ \Var(A_{4,\tau}| \alpha ,\gamma)]+ \Var[E[A_{4,\tau}| \alpha ,\gamma]]
\]
where $\alpha=\alpha_1,\alpha_2,....,\alpha_N$
$\gamma=\gamma_1,\gamma_2,....,\gamma_T$.
For the second term,
\[E[A_{4,\tau}| \alpha ,\gamma]=0,\]
and hence,
\[
\Var[E[A_{4,\tau}| \alpha_i,\gamma_t]]=0.
\]
Since $\psi_{it}$ and $\psi_{i't'}$ are independent given $\alpha_i$, $\alpha_{i'}$, $\gamma_t$ and $\gamma_{t'}$, $E[ \Var(A_{4,\tau}| \alpha ,\gamma)]$ can be given as 
\[
E[ \Var(A_{4,\tau}| \alpha ,\gamma)]=E[E\sum_{i=1}^N\sum_{t=1}^T \psi_{it} \psi^\prime_{it}|  \alpha_i ,\gamma_t]]=E[\frac{1}{NT\tau}\sum_{i=1}^N\sum_{t=1}^T \psi_{it}\psi^\prime_{it}].
\]
Hence
\begin{equation*}
\begin{aligned}
\Var(A_{4,\tau})=&E[\frac{1}{NT\tau}\sum_{i=1}^N\sum_{t=1}^T \psi_{it}\psi^\prime_{it}]\\
=&\frac{1}{\tau}E[(D_{it}(\tau)-E[D_{it}(\tau)|\alpha_i,\gamma_t])E[(D^\prime_{it}(\tau)-E[D^\prime_{it}(\tau)|\alpha_i,\gamma_t])]]\\
=&\frac{1}{\tau} (E[D_{it}(\tau)D^\prime_{it}(\tau)]-E[D_{it}(\tau)E[D^\prime_{it}(\tau)|\alpha_i,\gamma_t]]-E[E[D^\prime_{it}(\tau)|\alpha_i,\gamma_t]D^\prime_{it}(\tau)]\\
&+E[E[D_{it}(\tau)|\alpha_i,\gamma_t]E[D^\prime_{it}(\tau)|\alpha_i,\gamma_t] ]).
\end{aligned}
\end{equation*}
It is worth noting that 
\begin{align*}
E[D_{it}(\tau)E[D^\prime_{it}(\tau)|\alpha_i,\gamma_t]]=E[E[D_{it}(\tau)E[D^\prime_{it}(\tau)|\alpha_i,\gamma_t]|\alpha_i,\gamma_t]]=E[E[D_{it}(\tau)|\alpha_i,\gamma_t]E[D^\prime_{it}(\tau)|\alpha_i,\gamma_t] ].
\end{align*}
Hence, from Assumption \ref{b2} (iii),
\begin{equation}\label{varA4x}
\Var(A_{4,\tau})\rightarrow E[X_{it} X_{it}^\prime].
\end{equation}
Combining \eqref{varA3x} and \eqref{varA4x}, we have
\[
\lim_{\tau\to 0,\:\:\:N,T\to \infty} (\Var(A_{3,\tau})+\Var(A_{4,\tau}))\:\:= \lim_{\tau\to 0,\:\: N,T\to \infty} (\Var(A_{4,\tau}))=1.
\]
This leads to
\begin{equation}\label{124x}
\lim_{\tau\to 0,\:\: N,T\to \infty}  \Var(W_{NT}(\tau)) \:\:= \lim_{\tau\to 0,\:\: N,T\to \infty}(\Var(A_{1,\tau})+\Var(A_{2,\tau})+\Var(A_{4,\tau})).
\end{equation}

Now, we show that $A_{1,\tau}/\Var(A_{1,\tau})$, $A_{2,\tau}/\Var(A_{2,\tau})$, and $A_{4,\tau}/\Var(A_{4,\tau})$ are all asymptotically normal. 
We start by showing asymptotic normality of $A_{1,\tau}/\Var(A_{1,\tau})$. Define $Z_{N,i}$
\[
Z_{N,i}= \frac{A_{1,\tau}}{\sqrt{T}}=\frac{1}{\sqrt{N\tau}}E[\left(\tau-\mathbbm{1}\left[Y_{it}<X_{it}^{\prime} \beta(\tau)\right]\right) X_{it}|\alpha_i].
\]
Note that $Z_{N,1},...,Z_{N,N}$ are independent as $\alpha_1,...,\alpha_N$ are independent. Hence, 
\[
\Var(\sum_{i=1}^N Z_{N,i})=\frac{\Var(A_{1,\tau})}{T}.
\]
From Assumption \ref{lyapunov2} (i), the following condition holds.
\[
\frac{E[\sum_{i=1}^N \norm {Z_{N,i}}^3]}{E[\sum_{i=1}^N \norm {Z_{N,i}}^2]^{\frac{3}{2}}}=o(1).
\]
Also note that 
\[
\Var\left(\sum_{i=1}^N Z_{N,i}\right)=\frac{\Var(A_{1,\tau})}{T}.
\]
Lindeberg-Feller CLT yields,
\[
{\Var(A_{1,\tau})}^{-\frac{1}{2}} A_{1,\tau}=
\Var\left(\sum_{i=1}^N Z_{N,i}\right)^{-\frac{1}{2}}\left(\sum_{i=1}^N Z_{N,i}\right)\xrightarrow{d} N(0,1)
\]
For$A_{2,\tau}$, let
\[
W_{T,t}=\frac{1}{\sqrt{T\tau}} E[\left(\tau-\mathbbm{1}\left[Y_{it}<X_{it}^{\prime} \beta(\tau)\right]\right) X_{it}|\gamma_t]
\]
$W_{T,1},...,W_{T,T}$ are independent. 
From Assumption \ref{lyapunov2} (ii)
\[
\frac{E[\sum_{i=1}^N \norm {W_{T,t}}^3]}{E[\sum_{i=1}^N \norm {W_{T,t}}^2]^{\frac{3}{2}}}=o(1)
\]
Similarly Lindeberg-Feller CLT yields,
\[
{\Var(A_{2,\tau})}^{-\frac{1}{2}} A_{2,\tau}=
\Var\left(\sum_{t=T}^N W_{T,t}\right)^{-\frac{1}{2}}\left(\sum_{t=1}^T W_{T,t}\right)\xrightarrow{d} N(0,1).
\]
For $A_{4,\tau}$, let 
\[
U_{N+T,it}=\frac{1}{\sqrt{NT\tau}}(\left(\tau-\mathbbm{1}\left[Y_{it}<X_{it}^{\prime} \beta(\tau)\right]\right)X_{it}-E[\left(\tau-\mathbbm{1}\left[Y_{it}<X_{it}^{\prime} \beta(\tau)\right]\right) X_{it}|\alpha_i, \gamma_t])
\]
$U_{N+T,11},...,U_{N+T,NT}$ are independent conditional on $\alpha$ and $\gamma$.
From Assumption \ref{lyapunov2} (iii)
\[
\frac{E[\sum_{i=1}^N \sum_{i=t}^T \norm {U_{N+T,it}}^3|\alpha,\gamma]}{E[\sum_{i=1}^N \sum_{i=t}^T \norm {U_{N+T,it}}^2|\alpha,\gamma]^\frac{3}{2}}=o(1).
\]
Lindeberg-Feller CLT yields, given $\alpha$ and $\gamma$, 
\[
{\Var(A_{4,\tau})}^{-\frac{1}{2}}\sum_{i=1}^T U_{N+T}\xrightarrow{d} N(0,1).
\]
Additionally, we can show that
\[
\Var(A_{4,\tau}|\alpha,\gamma)=\frac{1}{NT\tau}\sum_{i=1}^N\sum_{t=1}^T E[\psi_{it}\psi^\prime_{it} |  \alpha_i ,\gamma_t]
\xrightarrow{p} E[\Var(A_{4,\tau}|\alpha,\gamma)]=\Var(A_{4,\tau}).
\]
From Chen and Rao's (2007) Theorem 2, we obtain
\begin{equation}\label{ANx}
(\Var(A_{1,\tau})+\Var(A_{2,\tau})+\Var(A_{4,\tau}))^{-\frac{1}{2}} W_{NT}(\tau)\xrightarrow{d} N(0,1).
\end{equation}
Combining \eqref{124x} and \eqref{ANx} we have
\[
\Sigma^{-\frac{1}{2}}_{NT} W_{NT}\xrightarrow{d} N(0,I)
\]
where $I$ is an identity matrix.
Consequently, we have 
\[
c^{-1}_{NT}\cdot W_{NT}(\tau)\xrightarrow{d} N(0, \Gamma)
\]
where 
\[
\Gamma=\lim_{N,T\to \infty, \tau\to 0 } c^{-2}_{NT}\cdot \Var(W_{NT}).
\]

\end{proof}
\subsection{Auxiliary Lemma: Asymptotic Behavior of $G_{NT}(z,\tau)$ (Quantile Regresion case)} \label{sec:lemma:regression:g}

\begin{lemma}\label{G2}

\[
G_{NT}(z, \tau)\xrightarrow{p} E[G_{NT}(z, \tau)]=  \frac{1}{2} \cdot \frac{m^{-\xi}-1}{-\xi} \cdot z^{\prime} \mathcal{Q}_H z
\]

where 
\[\mathcal{Q}_H \equiv E[[H(X)]^{-1} X X^{\prime}]\],
\[
H(x) \equiv x^{\prime} \mathbf{c} \quad \text { for type } 2 \text { and } 3 \text { tails, } \quad H(x) \equiv 1 \quad \text { for type } 1 \text { tails. }
\]

\end{lemma}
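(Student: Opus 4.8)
The plan is to mirror the proof of Lemma~\ref{lemma:g}, adapting each step to the vector-valued regression setting. First I would put $G_{NT}(z,\tau)$ in a scale-free form: the change of variable $s\mapsto s/a_{NT}$ in the inner integral turns the expression in \eqref{eq:q_decomposition2} into
\[
G_{NT}(z,\tau)=\frac{1}{\sqrt{\tau NT}}\sum_{i=1}^N\sum_{t=1}^T\lambda_{it},\qquad \lambda_{it}\equiv\int_0^{X_{it}'z}\big[\mathbbm{1}(Y_{it}-X_{it}'\beta(\tau)\le u/a_{NT})-\mathbbm{1}(Y_{it}-X_{it}'\beta(\tau)\le 0)\big]\,du.
\]
Then $E[G_{NT}(z,\tau)]=\tfrac{\sqrt{NT}}{\sqrt\tau}\,E[\lambda_{it}]$. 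Conditioning on $X_{it}$, using the linear model $F_Y(X_{it}'\beta(\tau)\mid X_{it})=\tau$ from \eqref{linearmodel}, and a Taylor expansion (with the remainder controlled uniformly on the compact support $\mathbf{X}$ of Assumption~\ref{x}), I get $E[\lambda_{it}]\sim E\!\big[f_Y(X'\beta(\tau)\mid X)\,(X'z)^2/(2a_{NT})\big]$. Plugging in $a_{NT}=\sqrt{\tau NT}/(\mu_X'(\beta(m\tau)-\beta(\tau)))$ reduces the expectation claim to showing, uniformly in $x\in\mathbf{X}$,
\[
f_Y(x'\beta(\tau)\mid x)\cdot\frac{\mu_X'(\beta(m\tau)-\beta(\tau))}{\tau}\;\longrightarrow\;\frac{1}{H(x)}\cdot\frac{m^{-\xi}-1}{-\xi}.
\]

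For this limit I would reproduce step~2 of Theorem~5.1 of Chernozhukov~(2005). Writing $U=Y-X'\beta_r$ as in Assumption~\ref{line}, one has $F_Y^{-1}(\tau\mid x)=F_U^{-1}(\tau\mid x)+x'\beta_r$, hence $x'\beta'(\tau)=\partial_\tau F_U^{-1}(\tau\mid x)$; Assumption~\ref{regular2} together with the chain rule and regular variation of $\partial_\tau F_u^{-1}$ (exponent $-\xi-1$) gives $x'\beta'(\tau)\sim K(x)^{\xi}\,\partial_\tau F_u^{-1}(\tau)$, so $f_Y(x'\beta(\tau)\mid x)=1/(x'\beta'(\tau))\sim K(x)^{-\xi}/\partial_\tau F_u^{-1}(\tau)$. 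The Chernozhukov~(2005, Thm.~3.1) identification $K(x)=e^{-x'\mathbf{c}}$ for Type~1 and $K(x)=(x'\mathbf{c})^{1/\xi}$ for Types~2--3 yields $K(x)^{-\xi}=H(x)^{-1}$. Meanwhile $\mu_X'(\beta(m\tau)-\beta(\tau))=\int_1^m\mu_X'\beta'(s\tau)\,\tau\,ds\sim\tau\,\partial_\tau F_u^{-1}(\tau)\int_1^m s^{-\xi-1}\,ds=\tau\,\partial_\tau F_u^{-1}(\tau)\,(m^{-\xi}-1)/(-\xi)$, using $K(\mu_X)=1$ (the $\xi=0$ case giving $\ln m$). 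Multiplying the two displays gives the pointwise-in-$x$ limit, and bounded convergence over $\mathbf{X}$ then gives $E[G_{NT}(z,\tau)]\to\tfrac12\cdot\tfrac{m^{-\xi}-1}{-\xi}\cdot z'\mathcal{Q}_H z$ with $\mathcal{Q}_H=E[H(X)^{-1}XX']$.

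It remains to show $\Var(G_{NT}(z,\tau))\to 0$, for which I would decompose $G_{NT}=B_1+B_2+B_3+B_4$ into Hájek-type projections onto $\alpha_i$, $\gamma_t$, $(\alpha_i,\gamma_t)$, and the remainder, as in Lemma~\ref{lemma:g}. Rerunning the expectation computation conditionally, now invoking Assumptions~\ref{conditional tail}, \ref{conditional regular}, and \ref{density2} (which provide the conditional tail equivalences and the $O(1)$ conditional-to-unconditional density-ratio bounds), gives $E\big[\norm{E[\lambda_{it}\mid\alpha_i]}^2\big]=O(\tau/NT)$ and likewise for the $\gamma_t$ and $(\alpha_i,\gamma_t)$ conditionings; summing yields $\Var(B_1)=O(1/N)$, $\Var(B_2)=O(1/T)$, $\Var(B_3)=O(1/NT)$. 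For $B_4$ I would use the crude pointwise bound $\norm{\lambda_{it}}\le K_0\,\norm{X_{it}}\,\abs{\mu_{it}}$ with $\mu_{it}=\mathbbm{1}(Y_{it}-X_{it}'\beta(\tau)\le X_{it}'z/a_{NT})-\mathbbm{1}(Y_{it}-X_{it}'\beta(\tau)\le 0)$, compactness of $\mathbf{X}$, and $E\abs{\mu_{it}}=O(\sqrt{\tau/NT})$ (from the expectation estimate) to obtain $\Var(B_4)=O(1/\sqrt{NT\tau})$. Since $N,T\to\infty$ and $\tau NT\to\infty$, all four bounds vanish, so $G_{NT}(z,\tau)-E[G_{NT}(z,\tau)]\xrightarrow{p}0$, and combined with the expectation limit this proves the lemma.

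The main obstacle is the expectation step: one must make the asymptotic equivalence $f_Y(x'\beta(\tau)\mid x)\sim H(x)^{-1}/\partial_\tau F_u^{-1}(\tau)$ hold \emph{uniformly} in $x$ over $\mathbf{X}$, which requires carefully combining Assumptions~\ref{line}--\ref{regular2} with the identification of $K(x)$ from Chernozhukov~(2005, Thm.~3.1) and controlling the Taylor remainder uniformly; the variance bookkeeping, by contrast, is a routine adaptation of Lemma~\ref{lemma:g}.
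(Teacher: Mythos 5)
Your proposal is correct and follows essentially the same route as the paper's proof: the expectation is computed via the Chernozhukov (2005, Thm.~5.1, step 2) Taylor-expansion and regular-variation argument with the $K(x)^{-\xi}=H(x)^{-1}$ identification (your derivation through $x'\beta'(\tau)=\partial_\tau F_U^{-1}(\tau\mid x)$ is just a rephrasing of the paper's relations between $f_{it}$ and $f_u$), and the concentration step uses the identical H\'ajek-type decomposition $B_1+B_2+B_3+B_4$ with the same variance rates $O(1/N)$, $O(1/T)$, $O(1/NT)$, $O(1/\sqrt{NT\tau})$ under Assumptions \ref{x}, \ref{conditional regular}, and \ref{density2}. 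No gaps beyond the uniformity care you already flag, which the paper handles the same way.
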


\begin{proof}

Let $F_{it}, f_{it}$ and $E_{it}$ denote $F_U\left(\cdot \mid X_{it}\right), f_U\left(\cdot \mid X_{it}\right)$ and $E\left[\cdot \mid X_{it}\right]$, respectively, where $U$ is the auxiliary error constructed in Assumption \ref{line}.
From Assumption \ref{line},
$x^{\prime}\left(\beta(\tau)-\beta_r\right) \equiv F_U^{-1}(\tau \mid x)$ 
$
\mu_X^{\prime}\left(\beta(\tau)-\beta_r\right) \equiv F_U^{-1}\left(\tau \mid \mu_X\right) \equiv F_u^{-1}(\tau)$.
For computing $E[G_{NT}(z, \tau)]$,  we follow the proof outlined in step 2 of Theorem 5.1 in Chernozhukov (2005).

\begin{equation}
\begin{aligned}
G_{NT}(z, \tau)&\equiv \sum_{i=1}^N \sum_{t=1}^T \frac{a_{NT}}{c^2_{NT}} \cdot\left(\int_0^{c_{NT}\cdot X_{it}^{\prime} z / a_{NT}}\left[\frac{\mathbbm{1}\left(Y_{it}-X_{it}^{\prime} \beta(\tau) \leq s\right)-\mathbbm{1}\left(Y_{it}-X_{it}^{\prime} \beta(\tau) \leq 0\right)}{\sqrt{\tau NT}}\right] d s\right) \\
&=\frac{1}{c_{NT}}\sum_{i=1}^N \sum_{t=1}^T\left(\int_0^{X_{it}^{\prime} z}\left[\frac{\mathbbm{1}\left(Y_{it}-X_{it}^{\prime} \beta(\tau) \leq c_{NT}\cdot s / a_{NT}\right)-\mathbbm{1}\left(Y_{it}-X_{it}^{\prime} \beta(\tau) \leq 0\right)}{\sqrt{\tau NT}}\right] d s\right),
\end{aligned}
\end{equation}

\begin{equation}{\label {c2}}
\begin{aligned}
E G_{NT}(z, \tau) &=\frac{NT}{c_{NT}} \cdot E\left[\int_0^{X_{it}^{\prime} z} \frac{F_{it}\left[F_{it}^{-1}(\tau)+c_{NT}\cdot s / a_{NT}\right]-F_{it}\left[F_{it}^{-1}(\tau)\right]}{\sqrt{\tau NT}} d s\right] \\
&=NT \cdot E\left[\int_0^{X_{it}^{\prime} z} \frac{F_{it}\left[F_{it}^{-1}(\tau)+s / a_{NT}\right]-F_{it}\left[F_{it}^{-1}(\tau)\right]}{\sqrt{\tau NT}} d s\right] \\
& \stackrel{(1)}{=} NT \cdot E\left[\int_0^{X_{it}^{\prime} z} \frac{f_{it}\left\{F_{it}^{-1}(\tau)+o\left(F_u^{-1}(m \tau)-F_u^{-1}(\tau)\right)\right\}}{a_{NT} \cdot \sqrt{\tau NT}} \cdot s \cdot d s\right] \\
& \stackrel{(2)}{\sim} NT \cdot E\left[\int_0^{X_{it}^{\prime} z} \frac{f_{it}\left\{F_{it}^{-1}(\tau)\right\}}{a_{NT} \cdot \sqrt{\tau NT}} \cdot s \cdot d s\right] \\
&=NT \cdot E\left[\frac{1}{2} \cdot\left(X_{it}^{\prime} z\right)^2 \cdot \frac{f_{it}\left\{F_{it}^{-1}(\tau)\right\}}{a_{NT} \cdot \sqrt{\tau NT}}\right] \\
&=E\left[\frac{1}{2} \cdot\left(X_{it}^{\prime} z\right)^2 \cdot \frac{F_u^{-1}(m \tau)-F_u^{-1}(\tau)}{\tau\left(f_{it}\left\{F_{it}^{-1}(\tau)\right\}\right)^{-1}}\right] \\
& \stackrel{(3)}{\sim} E\left[\frac{1}{2} \cdot\left(X_{it}^{\prime} z\right)^2 \cdot \frac{1}{H(X)} \cdot \frac{m^{-\xi}-1}{-\xi}\right] \\
& \equiv \frac{1}{2} \cdot \frac{m^{-\xi}-1}{-\xi} \cdot z^{\prime} \mathcal{Q}_H z .
\end{aligned}
\end{equation}
Equality (1) is by the definition of $a_{NT}$ and a Taylor expansion. Indeed, since $\tau N T \rightarrow \infty$ uniformly over $s$ in any compact subset of $\mathbb{R}$,
\[
s / a_T=s \cdot\left(F_u^{-1}(m \tau)-F_u^{-1}(\tau)\right) / \sqrt{\tau NT}=o\left(F_u^{-1}(m \tau)-F_u^{-1}(\tau)\right).
\]
To show equivalence (2), it suffices to prove that, for any sequence $v_\tau=$ $o\left(F_u^{-1}(m \tau)-F_u^{-1}(\tau)\right)$ with $m>1$ as $\tau \to 0$,
$f_{it}\left(F_{it}^{-1}(\tau)+v_\tau\right) \sim f_{it}\left(F_{it}^{-1}(\tau)\right) \quad \text { uniformly in } i \text{ and } t \text {. }$
From Assumption \ref{regular2},
\begin{equation}\label{(a)}
f_u\left(F_u^{-1}(l \tau / K)\right) \sim(l / K)^{\xi+1} f_u\left(F_u^{-1}(\tau)\right) \sim(l)^{\xi+1} f_u\left(F_u^{-1}(\tau / K)\right)
\end{equation}

Locally uniformly in $l \text { and uniformly in } K \in\{K(x): x \in \mathbf{X}\}$.
Also from Assumption \ref{regular2}
\begin{equation}\label{(b)}
1 / f_{it}\left(F_{it}^{-1}(\tau)\right) \sim \partial F_u^{-1}\left(\tau / K\left(X_{it}\right)\right) / \partial \tau=1 /\left\{K\left(X_{it}\right) f_u\left[F_u^{-1}\left(\tau / K\left(X_{it}\right)\right)\right]\right\}
\end{equation}
uniformly in $i$, $t$.
Combining (\ref{(a)}) and (\ref{(b)}) we have that locally uniform in $l$ and uniformly in $i$ and $t$ ,
\[
f_{it}\left(F_{it}^{-1}(l \tau)\right) \sim l^{\xi+1} f_{it}\left(F_{it}^{-1}(\tau)\right) .
\]
Locally uniform in $l$
\[
f_{it}\left(F_{it}^{-1}(\tau)+\left[F_{it}^{-1}(l \tau)-F_{it}^{-1}(\tau)\right]\right) \sim l^{\xi+1} f_u\left(F_{it}^{-1}(\tau)\right).
\]
Hence, for any $l_\tau \rightarrow 1$,
\[
f_{it}\left(F_{it}^{-1}(\tau)+\left[F_{it}^{-1}\left(l_\tau \tau\right)-F_{it}^{-1}(\tau)\right]\right) \sim f_{it}\left(F_{it}^{-1}(\tau)\right) .
\]
Hence, for any sequence $v_\tau=o\left(\left[F^{-1}(m \tau)-F^{-1}(\tau)\right]\right)$ with $m>1$ as $\tau \to 0$,
\[
f_{it}\left(F^{-1}_{it}(\tau)+v_\tau\right) \sim f_{it}\left(F_{it}^{-1}(\tau)\right)
\]
The equivalence (3) can be shown as 
\[
\frac{F_u^{-1}(m \tau)-F_u^{-1}(\tau)}{\tau\left(f_{it}\left[F_{it}^{-1}(\tau)\right]\right)^{-1}} \sim \frac{F_u^{-1}(m \tau)-F_u^{-1}(\tau)}{\tau\left(K\left(X_{it}\right) f_u\left[F_u^{-1}\left(\tau / K\left(X_{it}\right)\right)\right]\right)^{-1}} .
\]
 By (\ref{(b)}) we have that uniformly in $i$ and $t$
\[
\begin{aligned}
\frac{F_u^{-1}(m \tau)-F_u^{-1}(\tau)}{\tau\left(f_{it}\left[F_{it}^{-1}(\tau)\right]\right)^{-1}} & \sim \frac{1}{K\left(X_{it}\right)^{\xi}} \cdot \frac{F_u^{-1}(m \tau)-F_u^{-1}(\tau)}{\tau\left(f_u\left[F_u^{-1}(\tau)\right]\right)^{-1}} \\
& =\frac{1}{H\left(X_{it}\right)} \cdot \frac{F_u^{-1}(m \tau)-F_u^{-1}(\tau)}{\tau\left(f_u\left[F_u^{-1}(\tau)\right]\right)^{-1}},
\end{aligned}
\]
where $H\left(X_{it}\right)=X_{it}^{\prime} \mathbf{c}$ for $\xi \neq 0$ and $H\left(X_{it}\right)=1$ for $\xi=0$. Finally, by the regular variation property, 
\[
\begin{aligned}
\frac{F_u^{-1}(m \tau)-F_u^{-1}(\tau)}{\tau\left(f_u\left[F_u^{-1}(\tau)\right]\right)^{-1}} & \equiv \int_1^m \frac{f_u\left[F_u^{-1}(\tau)\right]}{f_u\left[F_u^{-1}(s \tau)\right]} d s \\
& \sim \int_1^m s^{-\xi-1} d s \\
& =\frac{m^{-\xi}-1}{-\xi} \quad(\ln m \text { if } \xi=0) .
\end{aligned}
\]
We still have to show that, 
\[
G_{NT}(z, \tau)\xrightarrow{p}  E[G_{NT}(z, \tau)].
\]
Let
\begin{equation*}
\lambda_{it}=\int_0^{X_{it}^{\prime} z}\left[\mathbbm{1}\left(Y_{it}-X_{it}^{\prime} \beta(\tau) \leq s / a_{NT}\right)-\mathbbm{1}\left(Y_{it}-X_{it}^{\prime} \beta(\tau) \leq 0\right)\right] d s .
\end{equation*}

\begin{equation*}
\begin{aligned}
 G_{NT}(z, \tau)=\frac{1}{\sqrt{NT\tau}}\sum_{i=1}^N\sum_{t=1}^T \lambda_{it} =&\frac{-\sqrt{T}}{\sqrt{\tau N}}\sum_{i=1}^N E[\lambda_{it}|\alpha_i]\\
&+\frac{-\sqrt{N}}{\sqrt{\tau T}}\sum_{t=1}^T E[\lambda_{it}|\gamma_{t}]\\
&+\frac{-1}{\sqrt{\tau N T}}\sum_{i=1}^N \sum_{t=1}^T (E[\lambda_{it}|\alpha_i, \gamma_t]-E[\lambda_{it}|\alpha_i]-E[\lambda_{it}|\gamma_t])\\
&+\frac{-1}{\sqrt{\tau NT}}\sum_{i=1}^N \sum_{t=1}^T(\lambda_{it}-E[\lambda_{it}|\alpha_i, \gamma_t])\\
&=B_1+B_2+B_3+B_4.
\end{aligned}
\end{equation*}
Observe that
\begin{equation*}
\begin{aligned}
\frac{\sqrt{NT}}{\sqrt{\tau}}\cdot E[\lambda_{it}|\alpha_i] &=\frac{\sqrt{NT}}{\sqrt{\tau}}\cdot E\left[\int_0^{X_{it}^{\prime} z}\left[\mathbbm{1}\left(Y_{it}-X_{it}^{\prime} \beta(\tau) \leq s / a_{NT}\right)-\mathbbm{1}\left(Y_{it}-X_{it}^{\prime} \beta(\tau) \leq 0\right)\right] d s\middle | \alpha_i\right]\\
& \stackrel{(1)}{=} \frac{\sqrt{NT}}{\sqrt{\tau}} \cdot E\left[\int_0^{X_{it}^{\prime} z} \frac{f_{it}\left\{F_{it}^{-1}(\tau)+o\left(F_u^{-1}(m \tau)-F_u^{-1}(\tau)\right)\right|\alpha_i\}}{a_{NT}} \cdot s \cdot d s\middle | \alpha_i \right] \\
& \stackrel{(2)}{\sim} \frac{\sqrt{NT}}{\sqrt{\tau}}\cdot E\left[\int_0^{X_{it}^{\prime} z} \frac{f_{it}\left\{F_{it}^{-1}(\tau)\right|\alpha_i\}}{a_{NT} } \cdot s \cdot d s \middle | \alpha_i \right] \\
&=\frac{\sqrt{NT}}{\sqrt{\tau}}\cdot E\left[\frac{1}{2} \cdot\left(X_{it}^{\prime} z\right)^2 \cdot \frac{f_{it}\left\{F_{it}^{-1}(\tau)\right|\alpha_i\}}{a_{NT} }\middle | \alpha_i \right] \\
&=E\left[\frac{1}{2} \cdot\left(X_{it}^{\prime} z\right)^2 \cdot \frac{F_u^{-1}(m \tau)-F_u^{-1}(\tau)}{\tau\left(f_{it}\left\{F_{it}^{-1}(\tau)\right|\alpha_i\}\right)^{-1}}\middle | \alpha_i \right] \\
& \stackrel{(3)}{\sim} E\left[\frac{1}{2} \cdot\left(X_{it}^{\prime} z\right)^2 \cdot \frac{1}{H(X_{it})} \cdot \frac{m^{-\xi}-1}{-\xi}\middle | \alpha_i \right] \\
& \equiv \frac{1}{2} \cdot \frac{m^{-\xi}-1}{-\xi} \cdot z^{\prime} E[[H(X_{it})]^{-1} X_{it} X_{it}^\prime | \alpha_i] z .
\end{aligned}
\end{equation*}
Note that (1) and (2) follow from Assumption \ref{conditional regular} (ii) and the same arguments made in equation (\ref{c2}).
(3) follows from Assumption \ref{conditional regular} (ii) and Assumption \ref{density2} (i) as 

\[
\begin{aligned}
\frac{F_u^{-1}(m \tau)-F_u^{-1}(\tau)}{\tau\left(f_u\left[F_u^{-1}(\tau)| \alpha_i\right]\right)^{-1}} & \equiv \int_1^m \frac{f_u\left[F_u^{-1}(\tau)\right]}{f_u\left[F_u^{-1}(s \tau)\right]} \cdot \frac{f_u\left[F_u^{-1}(\tau)| \alpha_i\right]}{f_u\left[F_u^{-1}(\tau)\right]}d s \\
& \sim \int_1^m s^{-\xi-1} d s \\
& =\frac{m^{-\xi}-1}{-\xi} \quad(\ln m \text { if } \xi=0) .
\end{aligned}
\]
It follows that
\[
\Var(E[\lambda_{it}|\alpha_i])=O(E[E[\lambda_{it}|\alpha_i]^2])=
O(\tau/NT)
\]
\[
\Var(B_1)=O(\tau/NT)\cdot \frac{T}{\tau N}\cdot N =O(1/{N}).
\]
Similarly, we have 
\[
\Var(E[\lambda_{it}|\gamma_t])=O(E[E[\lambda_{it}|\gamma_t]^2])=
O(\tau/NT)
\]
\[
\Var(E[\lambda_{it}|\alpha_i,\gamma_t])=O(E[E[\lambda_{it}|\alpha_i,\gamma_t]^2])=
O(\tau/NT).
\]
Hence
\[
\Var(B_2)=O(\tau/NT)\cdot \frac{N}{\tau T}\cdot T =O(1/{T})
\]
\[
\Var(B_3)=O(\tau/NT)\cdot \frac{1}{\tau N T}\cdot T =O(1/{NT}).
\]
Recall that 
\begin{equation*}
\lambda_{it}=\int_0^{X_{it}^{\prime} z}\left[\mathbbm{1}\left(Y_{it}-X_{it}^{\prime} \beta(\tau) \leq s / a_{NT}\right)-\mathbbm{1}\left(Y_{it}-X_{it}^{\prime} \beta(\tau) \leq 0\right)\right] d s .
\end{equation*}
Let
\begin{equation*}
\mu_{it}=\left(\mathbbm{1}\left(Y_{it}-X_{it}^{\prime} \beta(\tau) \leq X_{it}^{\prime} z / a_{NT}\right)-\mathbbm{1}\left(Y_{it}-X_{it}^{\prime} \beta(\tau) \leq 0\right)\right).
\end{equation*}
Then from Assumption \ref{x},
\begin{equation*}
\left|\lambda_{it}\right| \leq K_0\left|\mu_{it}\right|
\end{equation*}
for some $K_0<\infty$. 
\[
\Var(\lambda_{it})=O(E[\lambda^2_{it}])=O(E[\mu^2_{it}])=O(E[\abs{\mu_{it}}])
=O(\sqrt{\tau/NT}).
\]
Last equation follows from equation (\ref{c2}).
Therefore, 
\[\Var(G_{NT}(z, \tau))=\left(O\left(\frac{1}{N}\right)+O\left(\frac{1}{T}\right)+O\left(\frac{1}{NT}\right)+O\left(\frac{1}{\sqrt{NT\tau}}\right)\right).
\]

\[
G_{NT}(z, \tau)\xrightarrow{p} E[G_{NT}(z, \tau)]=  \frac{1}{2} \cdot \frac{m^{-\xi}-1}{-\xi} \cdot z^{\prime} \mathcal{Q}_H z.
\]
\end{proof}

\end{document}